\author{Lashi Bandara}
\title
{The relative index theorem for general first-order elliptic operators}
\date{\today}
\address{Lashi Bandara, 
Mathematics Department, 
Brunel University London, 
Kingston Lane, Uxbridge, Middlesex,  UB8 3PH
}
\urladdr{\href{https://www.brunel.ac.uk/people/menaka-bandara}{https://www.brunel.ac.uk/people/menaka-bandara}}
\email{\href{mailto:lashi.bandara@brunel.ac.uk}{lashi.bandara@brunel.ac.uk}}
\newcommand{\cE}{\mathcal{E}}
\newcommand{\cF}{\mathcal{F}}
\newcommand{\cK}{\mathcal{K}} 
\newcommand{\cM}{\mathcal{M}} 
\newcommand{\cN}{\mathcal{N}}
\newcommand{\mg}{\mathrm{g}}
\newcommand{\mh}{\mathrm{h}}
\newcommand{\Dir}{{\rm D} }
\newcommand{\Ad}{{\rm A} }
\newcommand{\sym}{\upsigma}
\newcommand{\checkH}{\check{\mathrm{H}}}	
\newcommand{\hatH}{\hat{\mathrm{H}}}		
\newcommand*{\coker}{\operatorname{coker}}
\newcommand*{\ind}{\operatorname{ind}}
\keywords{Relative index theorem, index theory, first-order elliptic operator, elliptically regular boundary condition}
\subjclass[2020]{58J20, 58J32, 58J90}
\def\colour{\colour}
\def\colour{\color}
\newtheorem{theorem}{Theorem}[section]
\newtheorem{corollary}[theorem]{Corollary}
\newtheorem{lemma}[theorem]{Lemma}
\newtheorem{proposition}[theorem]{Proposition}
\newtheorem{definition}[theorem]{Definition}
\newtheorem{remark}[theorem]{Remark}
\newcommand{\cbrac}[1]{\left(#1\right)}
\newcommand{\bbrac}[1]{\left[#1\right]}
\newcommand{\dbrac}[1]{\left\{#1\right\}}
\newcommand{\set}[1]{\dbrac{#1}}
\newcommand{\dom}{\mathrm{dom}}
\newcommand{\ran}{\mathrm{ran}}
\newcommand{\e}{\mathrm{e}}
\newcommand{\Co}{\mathbb{C}}
\newcommand{\In}{\mathbb{Z}}
\renewcommand{\emptyset}{\varnothing}
\newcommand{\union}{\cup}
\newcommand{\disunion}{\sqcup}
\newcommand{\rest}[1]{{{\lvert_{}}_{}}_{#1}}
\newcommand{\close}[1]{\overline{#1}}		
\newcommand{\id}{{\rm id}}
\renewcommand{\epsilon}{\varepsilon}
\renewcommand{\phi}{\varphi}
\newcommand{\graph}{\mathrm{graph}}		
\newcommand{\norm}[1]{\| #1 \|}			
\newcommand{\spt}[1]{{\rm spt} {\text{ }}#1}	
\DeclareMathOperator{\tr}{tr}			
\newcommand{\interior}[1]{\mathring{#1}}	
\newcommand{\cotanb}{{\rm T}^\ast}	
\DeclareFontFamily{OT1}{restrictfont}{}
\DeclareFontShape{OT1}{restrictfont}{m}{n}{<-> fmvr8x}{}
\newcommand{\inprod}[1]{\left\langle #1 \right\rangle}	
\newcommand{\sinprod}[1]{\langle #1 \rangle}	
\newcommand{\bddlf}{\mathcal{L}} 	
\newcommand{\Lp}[2][{}]{{\rm L}^{#2}_{\rm #1}}		
\newcommand{\Ck}[2][{}]{{\rm C}^{#2}_{\rm #1}}		
\newcommand{\Hard}[2][{}]{{\rm H}^{#2}_{\rm #1}}		
\newcommand{\SobH}[2][{}]{\Hard[#1]{\rm #2}}
\newcommand{\Dmax}{\Dir_{\max}}
\newcommand{\Dmin}{\Dir_{\min}}
\newcommand{\Hil}{\mathcal{H}} 
\newcommand{\Baps}{\mathrm{B_{APS}}}
\newcommand{\Bmatch}{\mathrm{B_{M}}}
\begin{document}

\maketitle
\begin{abstract}
The relative index theorem is proved for general first-order elliptic operators  that are complete and coercive at infinity  over measured manifolds.
This extends the original result by Gromov-Lawson for generalised Dirac operators as well as the result  of Bär-Ballmann for Dirac-type operators. 
The theorem is seen through the point of view of boundary value problems, using the graphical decomposition of elliptically regular boundary conditions for general first-order elliptic operators due to Bär-Bandara.
Splitting, decomposition and the Phi-relative index theorem are proved on route to the relative index theorem. 
\end{abstract} 
\tableofcontents

\parindent0cm
\setlength{\parskip}{\baselineskip}

\section{Introduction}

In \cite{GL} Gromov and Lawson proved the famed relative index theorem for Spin-Dirac operators on a class of non-compact manifolds. 
Their primary motivation was to use this result for the study of positive scalar curvature metrics closed manifolds. 
Their method was to study cylinder like regions over this closed manifold and utilise the Weitzenböck identity for the Spin-Dirac operator, where scalar curvature emerges as the lower order term relating the connection Laplacian to the Spin Laplacian.

Since then, this theorem has been generalised by multiple authors and in a myriad of directions.
Although an exhaustive list is too numerous to provide here, we refer to \cite{Bunke} by Bunke and references therein for an historical account of the topic.  

A generalisation of particular relevance for our purposes was obtained by Bär-Ballmann in   \cite{BB12}.
A significant aspect of this generalisation was their approach to relative index theory from the point of view of boundary value problems.
In particular,  a certain class of boundary conditions, namely those that are elliptically regular, were characterised through an associated graphical decomposition.
This allowed for the ability to deform, in a controlled manner, elliptically regular boundary conditions to the Atiyah-Patodi-Singer boundary condition, where this latter condition arose  famously in the series of papers \cite{APS0, APS1, APS2, APS3}.
However, the general framework in \cite{BB12} is restricted to a special class of first-order elliptic operators, of which Dirac-type operators are the quintessential example.
 
In \cite{BBan}, Bär and the author studied boundary value problems for general first-order elliptic operators.
This was largely motivated by the desire to study the Rarita-Schwinger operator on $\nicefrac 32$-spinors, a physically determined operator arising naturally in geometric contexts. 
It is an operator which fails to be of Dirac-type and is perhaps the quintessential example of a non Dirac-type operator.
In \cite{BBan}, the authors generalised the equivalence between elliptic regularity for a boundary condition and the admission of a graphical decomposition.
This set the stage to consider relative index theorems for operators beyond those that are Dirac-type.
Consequently, in this paper, we prove the following theorem, generalising the relative index theorem of Gromov-Lawson in \cite{GL} to the fullest extent.

\begin{theorem}[Relative index theorem]
\label{Thm:RelInd}
Let $(\cM_{1},\mu_1)$ and $(\cM_{2},\mu_2)$ be measured manifolds without boundary, $\Dir_{i}:\Ck{\infty}(\cM_i,\cE_i)\to\Ck{\infty}(\cM_i,\cF_i) $ first-order elliptic and complete (c.f. Definition~\ref{Def:Complete}). 

Suppose that: 
\begin{enumerate}[label=(\roman*)]
\item $\Dir_{1},\Dir_{2}$ agree outside compact $\cK_{1},\cK_{2}$ (the bundles and operators pull back in a canonical way, c.f. Definition~\ref{Def:Agree}); 
\item  $\mu_{1}=f^{\ast}\mu_{2}$ on $\cM_1 \setminus \cK_1$;
\item there is a compact two-sided hypersurface $\cN_{1}\subset\cM_{1}$ separating $\cM_1$  such that $\cM_{1}=\cM_{1}^{\prime}\cup\cM_{1}^{\prime\prime}$ with $\partial\cM_{1}^{\prime}=\partial\cM_{1}^{\prime\prime}=\cN_1$ (i.e., cutting along $\cN_1$ decomposes $\cM_1$ into two manifolds with boundary, c.f. Definition~\ref{Def:Cutting} and Definition~\ref{Def:Separate})  with $\cK_{1}\subset\interior{\cM}_{1}$.
\end{enumerate}

Then $\Dir_{1}$ is Fredholm  if and only if  $\Dir_{2}$ is Fredholm and in that case
\[
\ind(\Dir_{1}) -\ind(\Dir_{2}) =\int_{\cK_{1}}\alpha_{0,\Dir_{1}}\ d\mu_1 -\int_{\cK_{2}}\alpha_{0,\Dir_{2}}\ d\mu_2.
\]
The term  $\alpha_{0,\Dir_{i}} = \alpha_{0,\tilde{\Dir}_{i}}\rest{\cK_i}$ is the constant term appearing in the asymptotic expansion as $t \to 0$ of the kernel
$$ \tr \cbrac{\e^{-t \tilde{\Dir}_{i}^\ast \tilde{\Dir}_{i}} - \e^{-t\tilde{\Dir}_{i} \tilde{\Dir}_{i}^\ast}} (x) 
	\sim \sum_{k \geq - n} t^{\frac{k}{2}} \alpha_{k,\tilde{\Dir_{i}}}(x),$$
where $\tilde{\Dir}_{i}$ is an extension of $\Dir_{i}$ on an larger closed manifold containing $\cK_{i}$.
\end{theorem}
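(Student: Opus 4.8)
The plan is to localise the index difference to $\cK_1$ and $\cK_2$ by cutting both manifolds along the hypersurface and then re-gluing the ``interesting'' pieces into a single closed manifold, where the heat-kernel index formula applies. First I would transport the cut to $\cM_2$: by (i)--(ii) there is a canonical isomorphism $f$ between $\cM_1\setminus\cK_1$ and $\cM_2\setminus\cK_2$ which is measure-preserving and intertwines $\Dir_1$ with $\Dir_2$, so $\cN_2:=f(\cN_1)$ is a compact two-sided hypersurface in $\cM_2$, and a short argument using Definition~\ref{Def:Agree} shows it separates $\cM_2$ into $\cM_2'':=f(\cM_1'')$ and a piece $\cM_2'$ with $\cK_2\subset\intr{\cM_2'}$, with $f$ restricting to structure-preserving identifications $\cM_1''\cong\cM_2''$ and $\cM_1'\setminus\cK_1\cong\cM_2'\setminus\cK_2$. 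Replacing $\cN_1$ if necessary by the boundary of a compact smooth domain containing $\cK_1$ — which exists by completeness and does not disturb (i)--(iii) — I may assume $\cM_1'$, hence also $\cM_2'$, is compact. Finally fix an elliptically regular boundary condition $B$ on $\cN:=\cN_1\cong\cN_2$ whose adjoint $B'$ is again elliptically regular, e.g.\ $B=\Baps$.

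I would then invoke the splitting/decomposition theorem on each $\cM_i$: cutting along $\cN$, $\Dir_i$ is Fredholm if and only if the boundary value problems $(\Dir_i|_{\cM_i'},B)$ and $(\Dir_i|_{\cM_i''},B')$ both are, and in that case $\ind\Dir_i=\ind(\Dir_i|_{\cM_i'},B)+\ind(\Dir_i|_{\cM_i''},B')$. Here $(\Dir_i|_{\cM_i'},B)$ is automatically Fredholm (compact manifold, elliptically regular condition), while $f$ identifies the two problems $(\Dir_i|_{\cM_i''},B')$, $i=1,2$; so their Fredholmness is one and the same condition and their indices agree. This yields the Fredholm equivalence in the theorem and reduces the index identity to
\[
\ind(\Dir_1|_{\cM_1'},B)-\ind(\Dir_2|_{\cM_2'},B)=\int_{\cK_1}\alpha_{0,\Dir_1}\,d\mu_1-\int_{\cK_2}\alpha_{0,\Dir_2}\,d\mu_2 .
\]

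To compute the left-hand side, let $\hat\cM:=\cM_1'\cup_{\cN}\overline{\cM_2'}$ be the closed manifold obtained by identifying boundaries, where $\overline{\cM_2'}$ is $\cM_2'$ with reversed co-orientation of $\cN$ carrying the accordingly transformed operator $\overline{\Dir_2}$; this is set up precisely so that the glued $\hat\Dir$ is a genuine first-order elliptic operator — equal to $\Dir_1$ on $\cM_1'$ and to a copy of the formal adjoint $\Dir_2^{\ast}$ on $\overline{\cM_2'}$ — the gluing being legitimate because along $\cN$ both operators coincide with the common operator and its normal principal symbol identifies the relevant bundles. The splitting theorem applied to $\hat\cM$ with the matching condition gives $\ind\hat\Dir=\ind(\Dir_1|_{\cM_1'},B)+\ind(\overline{\Dir_2}|_{\overline{\cM_2'}},B')$, and since $(\overline{\Dir_2}|_{\overline{\cM_2'}},B')$ is, up to the mirror identification, the adjoint boundary value problem of $(\Dir_2|_{\cM_2'},B)$, its index is $-\ind(\Dir_2|_{\cM_2'},B)$; hence $\ind\hat\Dir$ equals the left-hand side above. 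On the other hand $\hat\cM$ is closed, so the McKean--Singer formula gives $\ind\hat\Dir=\int_{\hat\cM}\alpha_{0,\hat\Dir}\,d\hat\mu$; splitting this integral over $\cM_1'$ and $\overline{\cM_2'}$, using $\alpha_{0,\overline{\Dir_2}}=\alpha_{0,\Dir_2^{\ast}}=-\alpha_{0,\Dir_2}$, and observing that the contributions over $\cM_1'\setminus\cK_1$ and $\cM_2'\setminus\cK_2$ cancel because $\alpha_0$ is a \emph{local} invariant and $f$ is a measure-preserving intertwiner there, I recover exactly the right-hand side above, with $\alpha_{0,\Dir_i}|_{\cK_i}=\alpha_{0,\tilde\Dir_i}|_{\cK_i}$ by locality. (Equivalently, the last identity is an instance of the Phi-relative index theorem applied on $\hat\cM$.)

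The crux, and where I expect the real work, is the bookkeeping inside the splitting/decomposition theorem: one must check that $B$ and $B'$ on opposite sides of a hypersurface are genuinely complementary for the matching (transmission) condition, so that the reassembled operator is the intended one and indices add; that the sign conventions in the mirror construction and in the adjoint boundary value problem are mutually consistent, so that $\ind(\overline{\Dir_2}|_{\overline{\cM_2'}},B')=-\ind(\Dir_2|_{\cM_2'},B)$ lines up with $\alpha_{0,\Dir_2^{\ast}}=-\alpha_{0,\Dir_2}$; and that Fredholmness of the non-compact pieces $(\Dir_i|_{\cM_i''},B')$ is correctly controlled — which rests on completeness together with coercivity at infinity, and is exactly the place where the hypothesis that $\Dir_1$ and $\Dir_2$ agree at infinity is used. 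That all of this is available for operators which are not of Dirac type is precisely what the graphical decomposition of elliptically regular boundary conditions of Bär-Bandara supplies. A secondary point to keep straight is that $\alpha_{0,\Dir_i}$ is defined only via an auxiliary closed extension $\tilde\Dir_i$, so one must record that this coefficient is genuinely local, hence independent of the extension; this is also what licenses the cancellation of the exterior integrals above.
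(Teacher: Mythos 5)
Your opening moves — transporting the cut to $\cM_2$, reducing to the compact pieces $\cM_i'$, establishing Fredholmness via coercivity, and applying the decomposition theorem with an APS-type pair $(B,B')$ — coincide with the paper's route through the $\Phi$-relative index theorem (Theorem~\ref{Thm:PhiRelInd}). The divergence is in how you pass from $\ind(\Dir'_{1,B_1})-\ind(\Dir'_{2,B_2})$ to the heat-kernel integrals. The paper (Lemma~\ref{Lem:SmartDouble}) caps \emph{each} of $\cM_1'$ and $\cM_2'$ with one and the same auxiliary compact piece, obtaining two closed manifolds $\tilde\cM_1,\tilde\cM_2$ whose ``far'' halves are literally identical; the index contribution and the heat-trace density of that common cap then cancel automatically in the difference, and one never needs to know what the capped operator looks like. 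You instead glue $\cM_1'$ to a reflected copy of $\cM_2'$ into a single closed manifold $\hat\cM$, in the spirit of Gromov--Lawson's original argument.

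The gap is in the reflected half. You assert that the ``accordingly transformed operator'' on $\overline{\cM_2'}$ is a copy of the formal adjoint $\Dir_2^\dagger$, from which both $\ind(\overline{\Dir}_{2},B')=-\ind(\Dir_{2},B)$ and $\alpha_{0,\overline{\Dir}_{2}}=-\alpha_{0,\Dir_{2}}$ would follow. Neither part of this is available for a general first-order elliptic $\Dir:\Ck{\infty}(\cE)\to\Ck{\infty}(\cF)$. Writing $\Dir=\sym_0(\partial_t+\Ad_t)$ in a collar, the orientation-reversed pullback has principal normal symbol $-\sym_0$, whereas $\Dir^\dagger$ has principal normal symbol $-\sym_0^\ast$. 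These agree only when $\sym_0$ is (skew-)self-adjoint, which holds for Dirac-type operators (Clifford multiplication) but is precisely what one loses when leaving the Dirac-type class — the very generality this paper is after (e.g.\ the Rarita--Schwinger operator). Consequently, without extra structure the reflected half is not the adjoint problem, the cancellation $\ind(\overline{\Dir}_{2},B')=-\ind(\Dir_{2},B)$ is unjustified, and the pointwise identity $\alpha_{0,\overline{\Dir}_{2}}=-\alpha_{0,\Dir_{2}}$ fails; worse, the glued $\hat\Dir$ itself need not be a smooth first-order operator across $\cN$, since the normal symbol jumps from $\sym_0$ to $-\sym_0$ unless further conjugations are arranged. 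You flag this as a ``sign-convention'' issue to track, but it is a genuine structural obstruction, and it is exactly what Lemma~\ref{Lem:SmartDouble} is designed to avoid: by keeping $\Dir_1'$ and $\Dir_2'$ each on their own copy and capping with the \emph{same} auxiliary piece (so that coefficients genuinely agree on the overlap $U_i$ rather than only after a mirror), all the contributions from outside $\cK_i$ cancel for trivial reasons, and no adjoint/reflection identity for a general elliptic operator is ever invoked.
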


\section*{Acknowledgements}
The author was supported by SPP2026 from the German Research Foundation (DFG).
Penelope Gehring and Peter Grabs are duly acknowledged for their help in the delivery of a graduate level course at the University of Potsdam from which this paper emerged. 
The author also wishes to thank Christian Bär for useful discussions, encouragement and support.
Last but not least, the anonymous referee is acknowledged for their helpful comments and suggestions. 

\section{Preliminaries}

Let $(\cM,\mu)$ be a connected measured manifold with compact boundary $\partial\cM \subset \cM$ and  $(\cE,\mh^E), (\cF,\mh^F)\to\cM$ two Hermitian vector bundles over $\cM$.
The function spaces $\Ck{\infty}(\cM,\cE)$, $\Ck[c]{\infty}(\cM,\cF)$  and $\Ck[cc]{\infty}(\cM,\cE)$ respectively denote smooth sections, compactly supported smooth sections (which are allowed to touch the boundary) and compactly supported smooth sections supported away from the boundary. 

Furthermore, let $\Dir:\Ck{\infty}(\cM,\cE)\to\Ck{\infty}(\cM,\cF)$  be a first-order elliptic differential operator.
Recall that in this case, there exists a unique formal adjoint $\Dir^\dagger:\Ck{\infty}(\cM,\cF)\to\Ck{\infty}(\cM,\cE)$ to $\Dir$.
That is, 
$$\sinprod{\Dir u,v}_{\Lp{2}(\cM,\cF)} =\sinprod{u,\Dir^\dagger v}_{\Lp{2}(\cM,\cE)}$$ 
where $u\in\Ck[cc]{\infty}(\cM,\cE)$ and $v\in\Ck[cc]{\infty}(\cM,\cF)$.
The maximal operator corresponding to $\Dir$ is given by $\Dir_{\max} := (\Dir^\dagger\rest{\Ck[cc]{\infty}})^\ast$, where $\ast$ denotes the $\Lp{2}$-adjoint.
The minimal extension is $\Dir_{\min} :=\close{\Dir\rest{\Ck[cc]{\infty}}}$. 
By construction, it is clear that $\Dir_{\min} \subset \Dir_{\max}$.

In order to understand boundary conditions purely from data on the boundary, we need to ensure that potential ``implicit'' boundaries at infinity, arising from incompleteness, are ruled out.
To capture this, we require the following notion.
\begin{definition}[Complete]
\label{Def:Complete}
The operators $\Dir$ is said to be \emph{complete} if the subspace $\set{u \in \dom(\Dir_{\max}): \spt u\ \text{compact in }\cM}$ is dense in $\dom(\Dir_{\max})$.
\end{definition}

Note that in this definition, since our convention is $\partial \cM \subset \cM$, the dense subspace consists of sections that are allowed to touch the boundary.
This automatically self-improves in regularity: for complete $\Dir$,  we obtain $\Ck[c]{\infty}(\cM, \cE)$ is dense in $\dom(\Dir_{\max})$.
See Theorem~2.3~(i) in \cite{BBan}.

Let us now assume that $\Dir$ and $\Dir^\dagger$ are complete operators.
This ensures that $\faktor{\dom(\Dir_{\max})}{\dom(\Dir_{\min})}$ can be controlled purely in terms of a certain function space on the boundary. 
We will see that this function space is described via the class of gadgets on the boundary captured in the following definition.

\begin{definition}[Adapted boundary operator]
A first-order differential operator $\Ad:\Ck{\infty}(\partial\cM,\cE)\to\Ck{\infty}(\partial\cM,\cE)$ is called \emph{an adapted boundary operator (to $\Dir$)} if there exists  an inward pointing covectorfield $\tau \in \Ck{\infty}(\partial \cM, \cotanb \cM)$ such that the principal symbol of $\Ad$  satisfies:
\begin{equation} 
\label{Eq:AdBd} 
\sym_{\Ad}(x,\xi) =\sym_{\Dir}(x,\tau)^{-1}\circ\sym_{\Dir}(x,\xi), 
\end{equation} 
for all $x\in\partial\cM$ and $0\neq\xi\in\cotanb\cM$.
\end{definition}

In \cite{BBan}, two important assertions are made. 
The first  is that for any given inward pointing $\tau \in \Ck{\infty}(\partial \cM, \cotanb\cM)$, there exists an adapted boundary operator $\Ad$ satisfying \eqref{Eq:AdBd}. 
Moreover, such an $\Ad$ can always be chosen invertible $\omega$-bisectorial (the spectrum sits in a bisector in the complex plane containing the real line, and there are resolvent estimates of the form $\norm{(\zeta - A)^{-1}}\leq C$ outside of this bisector).
Moreover, the spectral cuts $\chi^{-}(\Ad)$ and $\chi^{+}(\Ad)$, respectively projecting to the generalised eigenspaces to the left and right of the imaginary axis, exist as pseudo-differential operators of order zero.
The second assertion is that, on defining 
\begin{equation} 
\label{Eq:Czech} 
\checkH_{\Ad}(\Dir)  :=\chi^-(\Ad)\SobH{\frac 12}(\partial\cM,\cE)\oplus\chi^+(\Ad)\SobH{-\frac12} (\partial\cM,\cE),
\end{equation}
and assuming that $\Dir$ and $\Dir^\dagger$ are complete, we obtain that the boundary trace map $\gamma: \Ck[c]{\infty}(\cM,\cE) \to \Ck{\infty}(\partial \cM,\cE)$ given by $u\mapsto u\rest{\partial\cM}$ extends uniquely to a bounded surjection $\gamma:\dom(\Dir_{\max})\to\checkH_{\Ad}(\Dir)$ with $\ker\gamma =\dom(\Dir_{\min})$.
See Proposition~4.6 and  Theorem~2.3 in \cite{BBan}. 

A  particular consequence of these results is that 
\begin{equation}
\label{Eq:BdyIso} 
\faktor{\dom(\Dir_{\max})}{\dom(\Dir_{\min})}\cong\checkH_{\Ad}(\Dir),
\end{equation} 
where the isomorphism  is the canonical quotient map induced from the boundary trace map $\gamma$.
Note that space $\checkH_{\Ad}(\Dir)$ is fixed as a set - it is the range of the boundary trace map $\gamma$.
An adapted boundary operator $\Ad$ only determines a particular way to compute its topology.
This justifies writing $\checkH(\Dir)$ in place of $\checkH_{\Ad}(\Dir)$, where the latter captures the particular norm for the topology given in terms of $\Ad$.

From a geometric point of view, the particular operator $\Ad$ can be thought of as analogous to a coordinate system. 
There are many choices for $\Ad$, where some choices may be better in computations for a given problem than others.
If the completeness assumption on $\Dir$ or $\Dir^\dagger$ is discarded, then  ``incompleteness'' away from the boundary can render the Banach space isomorphism in  \eqref{Eq:BdyIso} to fail.

The isomorphism \eqref{Eq:BdyIso} illustrates that all extensions of $\Dir_{\min}$ contained in  $\Dir_{\max}$ can be identified uniquely to a subspace of $\checkH(\Dir)$.
An extension is closed if and only if the subspace corresponding to this operator in $\checkH(\Dir)$ is closed.
\emph{Boundary conditions} are precisely closed subspaces of $\checkH(\Dir)$ and the space $\checkH(\Dir)$  is the total space of boundary conditions for $\Dir$.
The adjoint boundary condition is given by $B^\dagger =\set{v\rest{\partial\cM}: v\in\dom(\Dir_{B}^\ast)}$ and  $\Dir_B^\ast = \Dir^\dagger_{B^\dagger}$.

Keeping the isomorphism \eqref{Eq:BdyIso} in mind, note that the subspace $B_+ :=\chi^+(\Ad)\SobH{-\frac 12}(\partial\cM,\cE)$ is a perfectly legitimate boundary condition.
However, it is easy to see from this that $\dom(\Dir_{B_+})\not\subset\SobH[loc]{1}(\cM,\cE)$.
Therefore, unlike the situation for closed manifolds, we do not automatically have that extensions of an elliptic first-order operator $\Dir$ are $\SobH[loc]{1}$-regular.
When a boundary condition $B\subset\checkH_{\Ad}(\Dir)$ further satisfies $B\subset\SobH{\frac12}(\partial\cM,\cE)$, then we say that $B$ is \emph{elliptically semi-regular}.
If both  $B$ and $B^\dagger$ are  elliptically semiregular, then we say that $B$ is \emph{elliptically regular}. 
These are the quintessential class of boundary condition that will be used in this paper.

While this qualitative description of elliptically regular boundary conditions is conceptually appealing, a tangible characterisation is required to work with them.
To that end, we introduce the following notion.

\begin{definition} [Graphical $\Lp{2}$-decomposition]
\label{Def:GraphDecomp}
\index{Graphical $\Lp{2}$-decomposition}
Let $B\subset\checkH ( \Dir)$ and $\Ad$ an invertible  bisectorial adapted boundary operator to $\Dir$.  
Suppose that:

\begin{enumerate}[label=(\roman*)]
\item
There exist mutually complementary subspaces $W_{\pm}$ and $V_{\pm}$ of $\Lp{2}(\partial\cM,\cE) $ such that
\[
W_{\pm}\oplus V_{\pm}=\chi^{\pm}(\Ad)\Lp{2}(\partial\cM,\cE) .
\]

\item
$W_{\pm},W_{\pm}^{\ast}\subset\SobH{\frac{1}{2}}(\partial\cM,\cE) $ and are finite dimensional.

\item
There exists a bounded linear map
\[
g: V_{-}\to V_{+}
\]
such that 
\begin{align*}
g( V_{-}\cap\SobH{\frac{1}{2}}(\partial\cM,\cE) ) &\subset V_{+}\cap\SobH{\frac{1}{2}}(\partial\cM,\cE) ,
\\
g^{\ast}( V_{+}^{\ast}\cap\SobH{\frac{1}{2}}(\partial\cM,\cE) ) &\subset V_{-}^{\ast}\cap\SobH{\frac{1}{2}}(\partial\cM,\cE) ,
\end{align*}
and
\begin{align*}
B 
& =\set{ v+gv: v\in V\_\cap\SobH{\frac{1}{2}}(\partial\cM,\cE) }\oplus W_{+}.
\end{align*}
\end{enumerate}
Then we say that $B$ is \emph{$\Lp{2}$-graphically decomposable} with respect to\ $\chi^+(\Ad)$.
\end{definition}

A key result from \cite{BBan}, generalising the results of \cite{BB12}, is the following. 
\begin{theorem}[Theorem~2.9 \cite{BBan}]
\label{Thm:EllDecomp}
A subspace  $B$ is an elliptically regular boundary condition, i.e., $B\subset\SobH{\frac12}(\partial\cM,\cE)$ and $B^\dagger\subset\SobH{\frac12}(\partial\cM,\cF)$ if and only if for some (and hence all) invertible bisectorial adapted boundary operators $\Ad$,  $B$ is  $\Lp{2}$-graphically decomposable with respect to $\chi^{+}(\Ad)$.
\end{theorem}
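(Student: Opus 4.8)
The plan is to fix, once and for all, an invertible bisectorial adapted boundary operator $\Ad$ to $\Dir$ and to prove that ``$B\subseteq\SobH{\frac12}(\partial\cM,\cE)$ and $B^\dagger\subseteq\SobH{\frac12}(\partial\cM,\cF)$'' is equivalent to ``$B$ is $\Lp{2}$-graphically decomposable with respect to $\chi^+(\Ad)$'' for \emph{this} $\Ad$. Since the first condition refers only to the fixed set $\checkH(\Dir)$ and to the intrinsic property that $B$ be closed, it does not depend on $\Ad$, so the equivalence for one $\Ad$ already yields the ``for some (and hence all)'' clause. The one structural tool used repeatedly is the boundary Green pairing of \cite{BBan}: integration by parts produces a perfect pairing $\omega\colon\checkH(\Dir)\times\checkH(\Dir^\dagger)\to\Co$ with $\omega(\gamma u,\gamma v)=\sinprod{\Dir_{\max}u,v}-\sinprod{u,\Dir^\dagger_{\max}v}$, which on $\SobH{\frac12}(\partial\cM,\cE)\times\SobH{\frac12}(\partial\cM,\cF)$ agrees with a $\sym_{\Dir}(\cdot,\tau)$-twisted $\Lp{2}(\partial\cM)$ inner product and is thus $\Lp{2}$-continuous there, whose annihilator of the closed subspace $B$ is precisely $B^\dagger$, and which restricts to perfect pairings between each spectral half of $\checkH(\Dir)$ and the opposite spectral half of $\checkH(\Dir^\dagger)$ (computed with respect to an adapted operator $\Ad^\dagger$ for $\Dir^\dagger$).

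The direction ``$\Leftarrow$'' is the easy one. Given a graphical decomposition, $B\subseteq\SobH{\frac12}(\partial\cM,\cE)$ is immediate from (ii)--(iii); moreover $B$ is closed in $\checkH_{\Ad}(\Dir)$ because $\set{v+gv\colon v\in V_-\cap\SobH{\frac12}(\partial\cM,\cE)}$ is closed there---using that $V_-$ is $\Lp{2}$-closed, that $g$ is $\Lp{2}$-bounded, and that on the $\chi^+(\Ad)$-summand of \eqref{Eq:Czech} the ambient topology is the weaker $\SobH{-\frac12}$ one---while $W_+$ is finite-dimensional. For $B^\dagger\subseteq\SobH{\frac12}(\partial\cM,\cF)$ one computes the $\omega$-annihilator of $B$; using the symmetry of (ii)--(iii) under $\ast$ and the cross-pairing of spectral halves, $B^\dagger$ turns out to be itself $\Lp{2}$-graphically decomposable with respect to $\chi^+(\Ad^\dagger)$, its data obtained from $W_\mp$, $V_\mp$ and $g$ by taking adjoints, and applying the inclusion just established to $B^\dagger$ concludes this direction.

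The direction ``$\Rightarrow$'' carries the weight. Assume $B$ is elliptically regular and write $\close{B}$ for the closure of $B$ in $\Lp{2}(\partial\cM,\cE)$. \textbf{(A) Regularity.} One has $B=\close{B}\cap\SobH{\frac12}(\partial\cM,\cE)$: if $b=\lim_n b_n$ in $\Lp{2}$ with $b_n\in B$ and $b\in\SobH{\frac12}(\partial\cM,\cE)$, then for each $y\in B^\dagger\subseteq\SobH{\frac12}(\partial\cM,\cF)$ the number $\omega(b_n,y)$ vanishes and, being given by the $\Lp{2}$-continuous formula above, tends to $\omega(b,y)$; by perfectness of $\omega$ and closedness of $B$ this forces $b\in B$. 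The same argument shows $\close{B}\cap\chi^+(\Ad)\Lp{2}=B\cap\chi^+(\Ad)\SobH{\frac12}=:W_+$. \textbf{(B) Finite-dimensional corrections.} By Rellich's theorem on the compact manifold $\partial\cM$, the $\SobH{\frac12}$- and $\SobH{-\frac12}$-norms are equivalent on $W_+$ (combine $B\subseteq\SobH{\frac12}$ with \eqref{Eq:Czech}), so $W_+$ is finite-dimensional; the same construction on the $B^\dagger$-side yields a finite-dimensional $W_-\subseteq\SobH{\frac12}(\partial\cM,\cE)$ with $W_-^\ast$ also smooth, which $\omega$ identifies with a complement of $\chi^-(\Ad)B$ in $\chi^-(\Ad)\Lp{2}$. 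Together with the regularity theory of \cite{BBan} one checks in this way that $(\close{B},\chi^+(\Ad)\Lp{2})$ is a Fredholm pair of closed subspaces of $\Lp{2}(\partial\cM,\cE)$. \textbf{(C) Normal form and descent.} The graphical normal form for a Fredholm pair of subspaces of a Hilbert space now gives complementary splittings $W_\mp\oplus V_\mp=\chi^\mp(\Ad)\Lp{2}$ together with a bounded $g\colon V_-\to V_+$ such that $\close{B}=\set{v+gv\colon v\in V_-}\oplus W_+$, with $V_-=\overline{\chi^-(\Ad)B}^{\Lp{2}}$. Intersecting with $\SobH{\frac12}$, using (A) and $W_+\subseteq\SobH{\frac12}$, yields $B=\set{v+gv\colon v\in V_-,\ v+gv\in\SobH{\frac12}(\partial\cM,\cE)}\oplus W_+$. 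Finally, from the regularity theory of \cite{BBan} the map $\chi^-(\Ad)|_{B}\colon B\to\chi^-(\Ad)\SobH{\frac12}(\partial\cM,\cE)$ is Fredholm, its cokernel being identified by $\omega$ with the finite-dimensional $B^\dagger\cap\chi^+(\Ad^\dagger)\SobH{\frac12}(\partial\cM,\cF)$; hence $\chi^-(\Ad)B$ is a finite-codimensional closed subspace of $\chi^-(\Ad)\SobH{\frac12}$ cut out by functionals which, because $B^\dagger\subseteq\SobH{\frac12}$, are represented by $\SobH{\frac12}$-sections and are therefore $\Lp{2}$-continuous. Consequently $V_-\cap\SobH{\frac12}(\partial\cM,\cE)=\chi^-(\Ad)B$, which forces $v+gv\in\SobH{\frac12}$ whenever $v\in V_-\cap\SobH{\frac12}$, i.e.\ $g$ respects the $\SobH{\frac12}$-structure; the companion property of $g^\ast$ follows from the symmetric argument on $B^\dagger$, and condition (ii) from (B).

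The main obstacle is the interplay of the three boundary scales $\SobH{-\frac12}\supset\Lp{2}\supset\SobH{\frac12}$ running through (B) and (C): the graph map delivered by the soft Hilbert-space normal form is only $\Lp{2}$-bounded, and it is precisely in establishing the Fredholm-pair property itself---that $\close{B}+\chi^+(\Ad)\Lp{2}$ is closed of finite codimension, and that $\chi^-(\Ad)|_B$ is Fredholm at the $\SobH{\frac12}$ level---and in matching $V_-\cap\SobH{\frac12}$ with $\chi^-(\Ad)B$, that the full two-sided hypothesis (that \emph{both} $B$ and $B^\dagger$ are elliptically semi-regular) is indispensable, via Rellich compactness, the duality carried by $\omega$, and the a priori estimates of \cite{BBan}. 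This is Theorem~2.9 of \cite{BBan}, which for Dirac-type operators reduces to the corresponding statement in \cite{BB12}.
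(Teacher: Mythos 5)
Theorem~\ref{Thm:EllDecomp} is quoted in this paper as Theorem~2.9 of \cite{BBan}; the paper supplies no proof, only the citation. So strictly speaking there is no in-paper argument to compare yours against, and a definitive check would have to be made against \cite{BBan} itself (or, in the Dirac-type case, against the corresponding graphical-decomposition theorem in \cite{BB12}).

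As a reconstruction, your sketch runs through the right circle of ideas --- the boundary Green pairing, Rellich compactness, a Fredholm-pair normal form at the $\Lp{2}$ level, and descent to $\SobH{\frac12}$ --- and it correctly locates where the two-sided hypothesis is indispensable. The soft spots are concentrated in step~(C) of the hard direction. First, the assertion that $(\close{B},\chi^+(\Ad)\Lp{2})$ is a Fredholm pair is offered as a consequence of the finite-dimensionality of $W_+$ and $W_-$, but that alone does not yield closedness, let alone finite codimension, of the sum $\close{B}+\chi^+(\Ad)\Lp{2}$ in $\Lp{2}(\partial\cM,\cE)$; it is precisely here that the a~priori trace estimates of \cite{BBan}, which you invoke only generically, must carry the load. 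Second, Definition~\ref{Def:GraphDecomp}~(ii) requires $W_{\pm}^{\ast}\subseteq\SobH{\frac12}(\partial\cM,\cE)$, a condition on the adjoint subspaces; you gesture at it (``with $W_-^\ast$ also smooth'') without deriving it, and without it the ``companion property of $g^\ast$'' at the end of (C) does not close. Neither gap looks fatal to the strategy, but both are substantive and are exactly the points at which a reader would need to consult \cite{BBan} for the full argument.
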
 

Motivated by Atiyah-Patodi-Singer \cite{APS1} and in light of the developments in \cite{BBan}, for a given invertible bisectorial adapted boundary operator $\Ad$, its associated  \emph{Atiyah-Patodi-Singer (APS)} boundary condition is defined as
$$\Baps(\Ad) :=\chi^-(\Ad)\SobH{\frac12}(\partial\cM,\cE).$$
From Theorem~\ref{Thm:EllDecomp}, it is easy to see that this is an elliptically regular boundary condition.

\section{Coercivity and index} 

It is of import for us to be able to understand when extensions of operators are Fredholm. 
Much like the notion of completeness for an operator ensures the lack of potential ``boundary'' near infinity, the following condition ensures a sense of ``Fredholmness'' near infinity.

\begin{definition}
\label{Def:Coercive}
The operator  $\Dir$ is said to be \emph{coercive at infinity} if there exists $C > 0$ and a compact $\cK\subset\cM$ such that 
$$\norm{u}_{\Lp{2}(\cM,\cE)}\leq C\norm{\Dir u}_{\Lp{2}(\cM,\cF)}.$$
for all $u\in\Ck{\infty}(\cM,\cE)$  such that $\spt u \subset\cM\setminus \cK$.
\end{definition}

The following proposition provides a method in which to detect the coercivity at infinity of an operator, through the use of elliptically regular boundary conditions.

\begin{proposition}\label{Prop:CoerFred}
Let $B$ is semi-elliptically regular, i.e.\ $B$ is a boundary condition and $B\subset\SobH{\frac{1}{2}}(\partial \cM,\cE) $.
Then $\Dir$ is coercive at infinity if and only if $\Dir_{B}$ has finite dimensional kernel and closed range.
\end{proposition}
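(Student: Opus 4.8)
The plan is to prove both implications by relating the coercivity estimate on sections supported near infinity to a corresponding estimate for $\Dir_B$ on all of $\dom(\Dir_B)$, exploiting the compactness of $\partial\cM$ and the Rellich-type compactness of the trace/restriction maps into $\Lp{2}$.

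\emph{The direction ``coercive $\Rightarrow$ finite-dimensional kernel and closed range''.} Suppose $\Dir$ is coercive at infinity with constant $C$ and exceptional compact set $\cK$. Enlarging $\cK$ if necessary, I would fix a cutoff $\chi \in \Ck[c]{\infty}(\cM,\R)$ with $\chi \equiv 1$ on a neighbourhood of $\cK \cup \partial\cM$ and $\spt\chi$ compact, and write $u = \chi u + (1-\chi)u$ for $u \in \dom(\Dir_B)$. Since $B$ is semi-elliptically regular, $\dom(\Dir_B) \subset \SobH[loc]{1}(\cM,\cE)$ near the boundary and in particular $\chi u$ lies in a fixed first-order Sobolev space supported in the compact set $\spt\chi$; the key point is that on that compact region one has a Gårding/elliptic estimate $\norm{\chi u}_{\SobH{1}} \lesssim \norm{\Dir(\chi u)}_{\Lp{2}} + \norm{\chi u}_{\Lp{2}} \lesssim \norm{\Dir u}_{\Lp{2}} + \norm{u}_{\Lp{2}}$, where the commutator $[\Dir,\chi]$ is zeroth order with compactly supported coefficients. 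On the other hand $(1-\chi)u$ is supported in $\cM\setminus\cK$, so the coercivity hypothesis gives $\norm{(1-\chi)u}_{\Lp{2}} \leq C\norm{\Dir((1-\chi)u)}_{\Lp{2}} \lesssim \norm{\Dir u}_{\Lp{2}} + \norm{u}_{\Lp{2}}$, the last term again from the compactly supported commutator. Combining, $\norm{u}_{\Lp{2}} \lesssim \norm{\Dir u}_{\Lp{2}} + \norm{\chi u}_{\Lp{2}}$. Now $u \mapsto \chi u$ is a compact operator from $\dom(\Dir_B)$ (with graph norm) into $\Lp{2}(\cM,\cE)$: indeed $\chi u$ is controlled in $\SobH{1}$ over a fixed compact set by the displayed Gårding estimate, and Rellich gives compactness of $\SobH{1} \embed \Lp{2}$ there. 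An abstract functional-analytic lemma (an estimate $\norm{u} \lesssim \norm{\Dir_B u} + K u$ with $K$ compact forces finite-dimensional kernel and closed range — this is the standard ``semi-Fredholm'' criterion) then completes this direction.

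\emph{The direction ``finite-dimensional kernel and closed range $\Rightarrow$ coercive''.} Suppose $\Dir_B$ has finite-dimensional kernel $N := \ker\Dir_B$ and closed range. Then on the orthogonal complement $N^\perp \cap \dom(\Dir_B)$ there is a uniform estimate $\norm{u}_{\Lp{2}} \leq C'\norm{\Dir u}_{\Lp{2}}$. Given $u \in \Ck{\infty}(\cM,\cE)$ with $\spt u$ disjoint from a suitable compact $\cK$: if $\cK$ is chosen large enough to contain $\partial\cM$ and a neighbourhood of $\spt n$ for each member of an $\Lp{2}$-orthonormal basis of $N$ — possible since $N$ is finite-dimensional and (being in $\dom(\Dir_B)$ with $B$ semi-regular) its elements are, say, $\Lp{2}$ and can be approximated in graph norm by compactly supported sections (using completeness of $\Dir$), so one can arrange $N$ to be essentially concentrated in $\cK$ up to arbitrarily small error — then $u$ is nearly orthogonal to $N$ and has vanishing boundary trace, hence lies in (a small perturbation of) $N^\perp \cap \dom(\Dir_B)$, and the estimate applies. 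Absorbing the small error term $\norm{u}_{\Lp{2}}$ coming from the imperfect orthogonality yields coercivity at infinity with a slightly worse constant.

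\emph{Main obstacle.} The delicate point is the second direction: $\ker\Dir_B$ need not consist of compactly supported sections, so one cannot literally make $\spt u$ orthogonal to $N$ merely by placing $\cK$ around a fixed support. The fix is quantitative — use density of compactly supported sections in $\dom(\Dir_{\max})$ (completeness) to approximate an orthonormal basis of $N$ within graph-norm $\epsilon$ by compactly supported sections, enlarge $\cK$ to contain their supports, and carry the $O(\epsilon)$ defect through the estimate, absorbing it since $\epsilon$ may be taken arbitrarily small. I also expect the verification that $u\mapsto \chi u$ is genuinely a \emph{compact} operator on $\dom(\Dir_B)$ — rather than merely bounded — to require care, since it rests on interior elliptic regularity combined with the $\SobH{1}$-regularity of $\dom(\Dir_B)$ near the compact boundary guaranteed by semi-elliptic regularity of $B$, and then Rellich on a fixed compact set.
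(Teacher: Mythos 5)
Your proposal is correct and follows essentially the same strategy as the paper (which in turn mirrors B\"ar--Ballmann's Theorem~8.5): for the forward direction, the cutoff decomposition, G{\aa}rding estimate from semi-regularity of $B$, Rellich compactness on a fixed compact set, and the abstract semi-Fredholm criterion are exactly the ingredients used. The paper's proof only sketches that direction and refers the converse to \cite{BB12}, but your quantitative argument for the converse --- approximating a finite orthonormal basis of $\ker\Dir_B$ in $\Lp{2}$ by compactly supported sections via completeness, then absorbing the near-orthogonality defect --- is sound and in the spirit of the cited proof.
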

\begin{proof}
The proof mirrors the proof of Theorem~8.5 in \cite{BB12}. 

The key idea in the proof of this, in the ``only if'' direction, is to use the elliptic semi-regularity of $B$ to assert that $\dom(\Dir_{B})\subset\SobH[loc]{1}(\cM,\cE)$.  
This ensures the estimate 
\begin{equation}
\label{Eq:CompactNorm}
\norm{u}_{\SobH{1}(\cK',\cE)}\lesssim\norm{\Dir_B (\chi u)}_{\Lp{2}(\cM,\cF)} +\norm{\chi u}_{\Lp{2}(\cM,\cE)}, 
\end{equation} 
whenever $u\in\dom(\Dir_B)$ and $\chi\in\Ck[c]{\infty}(\cM, [0,1])$ which is identically $1$ on $\cK'$, a compact set satisfying $\cK\subset \cK'$.
Since $\cK'$ is compact, $\SobH{1}(\cK',\cE)$ embeds compactly into $\Lp{2}(\cK',\cE)$. 
Given a bounded sequence $u_n$ in $\dom(\Dir_{B})$ such that $\Dir_{B} u_n \to v$, we obtain $u \in \Lp[loc]{2}(\cM,\cE)$ such that $u_n \to v$, possibly on passing to a subsequence.
Then, using \eqref{Eq:CompactNorm}, we obtain $u \in \Lp{2}(\cM,\cE)$. 
That is, we have shown that for a bounded sequence $u_n \subset \dom(\Dir_{B})$, there is a convergence subsequence $u_{n_k}$, which yields that $\Dir_{B}$ has finite dimensional kernel and closed range (c.f. Proposition~A.3 in \cite{BB12}).
\end{proof}

As an immediate consequence, we obtain the following Fredholmness result.

\begin{corollary}
\label{Cor:CoerFred}
If $\Dir,\Dir^{\dagger}$ are coercive at infinity, and $B$ is elliptically regular, then $\Dir_{B}$ is Fredholm and
\[
\ind(\Dir_{B}) =\dim\ker(\Dir_{B})  -\dim\ker(\Dir_{B^{\dagger}}^{\dagger}) \in\In.
\]
\end{corollary}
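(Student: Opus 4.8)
The plan is to bootstrap everything from Proposition~\ref{Prop:CoerFred} together with the identification $\Dir_{B}^{\ast} = \Dir_{B^{\dagger}}^{\dagger}$ recorded in \secref{...} above (the preliminaries on boundary conditions). The point is that ``elliptically regular'' is precisely the hypothesis that makes Proposition~\ref{Prop:CoerFred} applicable simultaneously to $(\Dir, B)$ and to $(\Dir^{\dagger}, B^{\dagger})$.

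First I would unravel the definition: since $B$ is elliptically regular, both $B$ and $B^{\dagger}$ are elliptically semi-regular, i.e.\ $B \subset \SobH{\frac12}(\partial\cM,\cE)$ and $B^{\dagger} \subset \SobH{\frac12}(\partial\cM,\cF)$. Applying Proposition~\ref{Prop:CoerFred} to $\Dir$ with the semi-elliptically regular boundary condition $B$, coercivity at infinity of $\Dir$ gives that $\Dir_{B}$ has finite-dimensional kernel and closed range. Applying the same proposition to $\Dir^{\dagger}$ with $B^{\dagger}$ (legitimate because $\Dir^{\dagger}$ is coercive at infinity and $B^{\dagger}$ is semi-elliptically regular), we obtain that $\Dir_{B^{\dagger}}^{\dagger}$ has finite-dimensional kernel and closed range.

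Next I would pass to the cokernel. Since $B$ is a boundary condition, $\Dir_{B}$ is a densely defined closed operator, and $\Dir_{B}^{\ast} = \Dir_{B^{\dagger}}^{\dagger}$. Because $\ran(\Dir_{B})$ is closed in $\Lp{2}(\cM,\cF)$, the orthogonal decomposition $\Lp{2}(\cM,\cF) = \ran(\Dir_{B}) \oplus \ker(\Dir_{B}^{\ast})$ identifies $\coker(\Dir_{B}) \cong \ker(\Dir_{B}^{\ast}) = \ker(\Dir_{B^{\dagger}}^{\dagger})$, which by the previous step is finite-dimensional. Hence $\Dir_{B}$ is a closed operator with closed range, finite-dimensional kernel and finite-dimensional cokernel, so it is Fredholm, and
\[
\ind(\Dir_{B}) = \dim\ker(\Dir_{B}) - \dim\coker(\Dir_{B}) = \dim\ker(\Dir_{B}) - \dim\ker(\Dir_{B^{\dagger}}^{\dagger}) \in \In.
\]

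I do not expect a genuine obstacle here; the argument is essentially bookkeeping. The only points requiring a little care are: (a) invoking the hypothesis that $B^{\dagger}$ (not just $B$) is semi-elliptically regular, which is exactly why ``elliptically regular'' rather than merely ``semi-elliptically regular'' is assumed; (b) the fact that $\Dir_{B}$ is closed and that the $\Lp{2}$-adjoint of $\Dir_{B}$ is $\Dir_{B^{\dagger}}^{\dagger}$, both already established in the preliminaries; and (c) using closedness of $\ran(\Dir_{B})$ to pass from the cokernel to $\ker(\Dir_{B}^{\ast})$ — standard Hilbert space theory. If anything needs emphasis, it is that coercivity at infinity of \emph{both} $\Dir$ and $\Dir^{\dagger}$ is essential: one copy controls the kernel, the other the cokernel.
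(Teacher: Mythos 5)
Your argument is exactly what the paper has in mind: the paper labels this corollary an ``immediate consequence'' of Proposition~\ref{Prop:CoerFred}, and your proof is precisely the correct unwinding of that, applying the proposition once to $(\Dir,B)$ and once to $(\Dir^\dagger,B^\dagger)$, then using $\Dir_B^\ast=\Dir^\dagger_{B^\dagger}$ and closedness of $\ran(\Dir_B)$ to identify the cokernel with $\ker(\Dir^\dagger_{B^\dagger})$. No gaps; the emphasis on needing both coercivity hypotheses and both halves of elliptic regularity is correct and well placed.
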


\section{Deformations of boundary conditions}

A virtue of being able to identify the entirety of boundary conditions using \eqref{Eq:Czech} and \eqref{Eq:BdyIso} is the ability to study perturbations and deformations of boundary conditions.
\begin{definition}
\label{Def:DeformBC}
A family of boundary conditions $B_{s}\subset\checkH (\Dir) $ for $s\in\left[ 0,1\right] $ is said to be a continuous deformation from $B_{0}$ to $B_{1}$, if there exist isomorphisms
\[
\phi_{s}: B_{0}\to B_{s}\text{ with }\phi_{0}=\id
\]
with $s\mapsto\phi_{s}\in\Ck{0}([0,1],\bddlf( B_{0},\checkH (\Dir) )).$ 
\end{definition}

\begin{remark}
\label{Rem:ContDeform}
In what is to follow, deformations of boundary conditions are paramount for elliptically regular boundary conditions. 
For such a boundary condition $B$, in addition to the fact that $B$ is closed in $\checkH(\Dir)$, we have that $B\subset\SobH{\frac12}(\cM,\cE)$ is closed.
Since $\SobH{\frac12}(\cM,\cE)\subset\checkH(\Dir)$, which in particular means we have the estimate:
\begin{equation} 
\label{Eq:SobEmbed} 
\norm{u}_{\checkH(\Dir)}^2 =\norm{\chi^+(\Ad)u}_{\SobH{-\frac12}(\partial\cM,\cE)}^2 +\norm{\chi^-(\Ad)u}_{\SobH{\frac12}(\partial\cM,\cE)}^2\lesssim \norm{u}_{\SobH{\frac12}(\partial\cM,\cE)}^2,
\end{equation} 
we and use Lemma A.3 in \cite{BBan} to assert that 
\begin{equation}
\label{Eq:SobIso} 
\norm{u}_{\checkH(\Dir)}\simeq\norm{u}_{\SobH{\frac12}(\partial\cM,\cE)}
\end{equation}
for all $u\in B$.
Since the implicit constant appearing in \eqref{Eq:SobEmbed} only depends on $\Ad$, the implicit constant in \eqref{Eq:SobIso} is independent of $B$.
Therefore, if we have $B_s\subset\SobH{\frac12}(\partial\cM,\cE)$ for all $s\in [0,1]$, then we obtain
$$s\mapsto\phi_s\in\Ck{0}([0,1],\SobH{\frac12}(\partial\cM,\cE)).$$
In particular, this is the case when $B_s$ is elliptically regular for all $s\in [0,1]$.
\end{remark} 

The following proposition describes the way in which the deformation of a boundary condition deforms the operator itself.
In the light of \eqref{Eq:BdyIso}, this is certainly to be expected.

\begin{proposition}\label{Prop:DeformIso}
Let $\Dir$ and $\Dir^\dagger$ be complete and suppose that $s\mapsto\phi_{s}$ is a continuous deformation of boundary conditions.
Then there exists a map $s\mapsto\Phi_{s}\in\bddlf(\dom(\Dir_{B_{0}}) ,\dom(\Dmax) ) $  such that  $\Phi_{s}\dom(\Dir_{B_{0}}) =\dom(\Dir_{B_{s}}) $.
\end{proposition}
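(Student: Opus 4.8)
The plan is to transport the deformation $\phi_s : B_0 \to B_s$ from the boundary into the interior using the structure provided by the isomorphism \eqref{Eq:BdyIso}. The key observation is that $\dom(\Dir_{\max})$ sits in a short exact sequence
\[
0 \longrightarrow \dom(\Dir_{\min}) \longrightarrow \dom(\Dir_{\max}) \stackrel{\gamma}{\longrightarrow} \checkH(\Dir) \longrightarrow 0,
\]
and $\dom(\Dir_{B_s}) = \gamma^{-1}(B_s)$. So I want to lift $\phi_s$ along $\gamma$. First I would fix a bounded right inverse $\eta : \checkH(\Dir) \to \dom(\Dir_{\max})$ of $\gamma$ (this exists because $\gamma$ is a bounded surjection of Banach spaces, using the completeness hypothesis on $\Dir, \Dir^\dagger$ and the open mapping theorem; indeed $\dom(\Dir_{\max})$ splits as $\dom(\Dir_{\min}) \oplus \eta(\checkH(\Dir))$). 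Then, for $u \in \dom(\Dir_{B_0})$, write $u = u_{\min} + \eta(\gamma u)$ with $u_{\min} \in \dom(\Dir_{\min})$ and $\gamma u \in B_0$, and define
\[
\Phi_s u := u_{\min} + \eta\bigl(\phi_s(\gamma u)\bigr) = u - \eta(\gamma u) + \eta\bigl(\phi_s(\gamma u)\bigr).
\]
Since $\phi_0 = \id$ we get $\Phi_0 = \id$, and $\gamma(\Phi_s u) = \phi_s(\gamma u) \in B_s$, so $\Phi_s$ maps $\dom(\Dir_{B_0})$ into $\dom(\Dir_{B_s}) \subset \dom(\Dir_{\max})$.

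Next I would verify the three required properties. Boundedness of $\Phi_s$ as a map $\dom(\Dir_{B_0}) \to \dom(\Dir_{\max})$ is immediate from boundedness of $\gamma$, $\eta$ and $\phi_s$, with the $\Dir_{\max}$-graph norm; here one uses that $\dom(\Dir_{B_0})$ carries the graph norm and that the inclusion $\dom(\Dir_{B_0}) \hookrightarrow \dom(\Dir_{\max})$ is continuous. Continuity of $s \mapsto \Phi_s$ in $\bddlf(\dom(\Dir_{B_0}), \dom(\Dir_{\max}))$ follows from the hypothesis $s \mapsto \phi_s \in \Ck{0}([0,1], \bddlf(B_0, \checkH(\Dir)))$ composed with the fixed bounded operator $\eta$. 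Finally, for the surjectivity statement $\Phi_s \dom(\Dir_{B_0}) = \dom(\Dir_{B_s})$: given $w \in \dom(\Dir_{B_s})$, set $u := w - \eta(\gamma w) + \eta(\phi_s^{-1}(\gamma w))$; then $\gamma u = \phi_s^{-1}(\gamma w) \in B_0$, so $u \in \dom(\Dir_{B_0})$, and a direct computation using $\Phi_s u = u - \eta(\gamma u) + \eta(\phi_s \gamma u) = w - \eta(\gamma w) + \eta(\gamma w) = w$ (after cancelling, using $\phi_s \gamma u = \gamma w$) shows $\Phi_s u = w$. So $\Phi_s$ restricts to a bijection $\dom(\Dir_{B_0}) \to \dom(\Dir_{B_s})$, with inverse built from $\phi_s^{-1}$.

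The main obstacle I anticipate is establishing the existence of the bounded linear right inverse $\eta$ of $\gamma$ with the correct mapping properties — in particular that the splitting $\dom(\Dir_{\max}) = \dom(\Dir_{\min}) \oplus \eta(\checkH(\Dir))$ is a topological direct sum in the graph norm. This rests on the fact, quoted from \cite{BBan} (Proposition~4.6 and Theorem~2.3, cited above), that $\gamma : \dom(\Dir_{\max}) \to \checkH_{\Ad}(\Dir)$ is a bounded surjection with kernel $\dom(\Dir_{\min})$ and that $\checkH_{\Ad}(\Dir)$ is a Banach space in a topology making this all work; the open mapping theorem then yields a bounded inverse $\bar\gamma^{-1}$ on the quotient $\faktor{\dom(\Dir_{\max})}{\dom(\Dir_{\min})}$, and one must lift this to an actual (non-canonical) section $\eta$, which requires $\dom(\Dir_{\min})$ to be complemented — automatic in Hilbert space but needing a Banach-space argument here, or alternatively one can work throughout with the quotient $\faktor{\dom(\Dir_{\max})}{\dom(\Dir_{\min})}$ and only invoke a section at the very end. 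A secondary, more routine point is checking that all the norm estimates are uniform in $s \in [0,1]$, which follows from compactness of $[0,1]$ together with the continuity hypothesis on $s \mapsto \phi_s$.
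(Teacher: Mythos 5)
Your proof is correct and takes essentially the same approach as the paper: the paper establishes the result by chaining isomorphisms $\dom(\Dir_{B_s}) \cong B_s \oplus \dom(\Dmin) \cong B_0 \oplus \dom(\Dmin) \cong \dom(\Dir_{B_0})$, which is precisely what your explicit formula $\Phi_s u = u - \eta(\gamma u) + \eta(\phi_s\gamma u)$ implements once a bounded section $\eta$ of $\gamma$ is chosen. The complementation concern you flag is moot here, since $\dom(\Dmax)$ with the graph norm is a Hilbert space (a closed subspace of $\Lp{2}\oplus\Lp{2}$), so the closed subspace $\dom(\Dmin)$ admits an orthogonal complement automatically.
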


\begin{proof}
Recall that $\faktor{\dom(\Dmax)}{\dom(\Dmin)}\cong\checkH (\Dir) $.
Since $\dom(\Dir_{B_s})\subset\dom(\Dmax) $ is a closed subspace and $\faktor{\dom(\Dir_{B_{s}})}{\dom(\Dmin)}\subset\faktor{\dom(\Dmax)}{\dom(\Dmin)}$ we obtain that
\[
\faktor{\dom(\Dir_{B_{s}})}{\dom(\Dmin)}\cong B_{s}
\]
in the sense of Banach spaces with the constant in the isomorphism independent of $B_{s}$.

Now,
\begin{align*}
\dom(\Dir_{B_{s}}) &\cong\faktor{\dom(\Dir_{B_{s}})}{\dom(\Dmin)}\oplus\dom(\Dmin)
\\
&\cong B_{s}\oplus\dom(\Dmin)
\\
&\cong\phi_{s}( B_{0})\oplus\dom(\Dmin)
\\
&\cong B_{0}\oplus\dom(\Dmin)
\\
&\cong\faktor{\dom(\Dir_{B_{0}})}{\dom(\Dmin)}\oplus\dom(\Dmin)
\\
&\cong\dom(\Dir_{B_{0}})\oplus\dom(\Dmin) .
\end{align*}
In the fourth isomorphism, $s\mapsto\phi_{s}$ is continuous and determines $\Phi_{s}:\dom(\Dir_{B_{0}})\to\dom(\Dir_{B_{s}}) $ continuously.
The conclusion follows.
\end{proof}

With the use of Remark~\ref{Rem:ContDeform}, we obtain the following corollary for elliptically regular boundary conditions.
\begin{corollary}
\label{Cor:DeformBC}
Let $\Dir$ and $\Dir^\dagger$ be complete and coercive at infinity. 
Let $\Ad$ be any invertible bisectorial adapted boundary operator and $B$ be an elliptically regular boundary condition. 
Write
\[
B=\graph( g\rest{\SobH{\frac{1}{2}}})\oplus W_{+}
\]
with respect to $\chi^+(\Ad)$ (see Definition~\ref{Def:GraphDecomp}).
Define
\begin{align*}
\phi_{s}: B_{0}=V_{-}\oplus W_{+} \to B_{s} &:=\graph( sg\rest{\SobH{\frac{1}{2}}})\oplus W_{+},
\\
\phi_s(v+w_{+}) &:= v+sgv+w_{+},
\end{align*}
where we recall $V_{\pm}\oplus W_{\pm}=\chi^{\pm}(\Ad)\Lp{2}(\partial M,E) $ from Definition~\ref{Def:GraphDecomp}.

Then $s\mapsto\phi_{s}: B_{0}\to\checkH(\Dir)$ is a continuous deformation of boundary conditions and
\[
\ind(\Dir_{B_{0}}) =\ind(\Dir_{B_{s}}) =\ind(\Dir_{B}) .
\]
\end{corollary}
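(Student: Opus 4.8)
The plan is to show that each $B_s$ is again elliptically regular (so that $\Dir_{B_s}$ is Fredholm for every $s$), that $s\mapsto\phi_s$ is a genuine continuous deformation in the sense of Definition~\ref{Def:DeformBC}, and finally to transport the family $\Dir_{B_s}$ to a norm-continuous path of Fredholm operators between a \emph{fixed} pair of Banach spaces, whence the index is locally constant on $[0,1]$. For the first point I would observe that the subspaces $W_{\pm},V_{\pm}$ decomposing $\chi^{\pm}(\Ad)\Lp{2}(\partial\cM,\cE)$ for $B$ serve verbatim for $B_s$, with $g$ replaced by $sg$: boundedness of $sg$ and the mapping properties $sg(V_-\cap\SobH{\frac12}(\partial\cM,\cE))\subset V_+\cap\SobH{\frac12}(\partial\cM,\cE)$ and $(sg)^\ast=sg^\ast$ satisfying the adjoint condition are immediate from the corresponding properties of $g$. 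Hence $B_s=\graph(sg\rest{\SobH{\frac12}})\oplus W_+$ is $\Lp{2}$-graphically decomposable with respect to $\chi^+(\Ad)$, so Theorem~\ref{Thm:EllDecomp} gives that $B_s$ and $B_s^\dagger$ are elliptically regular; since $\Dir,\Dir^\dagger$ are complete and coercive at infinity, Corollary~\ref{Cor:CoerFred} makes $\Dir_{B_s}$ Fredholm, so $\ind(\Dir_{B_s})\in\In$ is defined for all $s\in[0,1]$.

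Next I would verify directly that $s\mapsto\phi_s$ is a continuous deformation. Each $\phi_s$ is linear, maps $B_0$ bijectively onto $B_s$, and $\phi_0=\id$. For continuity, since $(\phi_s-\phi_t)(v+w_+)=(s-t)gv$, I would use that $g$ is bounded on $\SobH{\frac12}(\partial\cM,\cE)$ (closed graph theorem, from items (ii) and (iii) of Definition~\ref{Def:GraphDecomp} together with boundedness of $g$ on $\Lp{2}(\partial\cM,\cE)$), that $v=\chi^-(\Ad)(v+w_+)$ with $\chi^-(\Ad)$ a bounded zeroth-order pseudodifferential operator on $\SobH{\frac12}(\partial\cM,\cE)$, and the uniform norm equivalence $\norm{\cdot}_{\checkH(\Dir)}\simeq\norm{\cdot}_{\SobH{\frac12}(\partial\cM,\cE)}$ on elliptically regular boundary conditions, with implicit constants independent of the boundary condition (Remark~\ref{Rem:ContDeform}). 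Combining these gives
\[
\norm{(\phi_s-\phi_t)(v+w_+)}_{\checkH(\Dir)}\lesssim |s-t|\,\norm{v}_{\SobH{\frac12}(\partial\cM,\cE)}\lesssim |s-t|\,\norm{v+w_+}_{\checkH(\Dir)},
\]
with uniform implicit constants; in particular $\phi_s\in\bddlf(B_0,\checkH(\Dir))$ and $s\mapsto\phi_s$ is Lipschitz, hence a continuous deformation from $B_0$ to $B_1=B$.

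For the index statement I would invoke Proposition~\ref{Prop:DeformIso} to obtain a norm-continuous family $s\mapsto\Phi_s\in\bddlf(\dom(\Dir_{B_0}),\dom(\Dmax))$ which, as its proof shows, restricts to a topological isomorphism $\Phi_s:\dom(\Dir_{B_0})\to\dom(\Dir_{B_s})$. Setting $T_s:=\Dmax\circ\Phi_s\in\bddlf(\dom(\Dir_{B_0}),\Lp{2}(\cM,\cF))$ — which is norm-continuous in $s$ because $\Dmax:\dom(\Dmax)\to\Lp{2}(\cM,\cF)$ is bounded for the graph norm — I would note that $\Phi_s$ restricts to an isomorphism $\ker T_s\to\ker\Dir_{B_s}$ and that $\ran T_s=\Dmax(\dom(\Dir_{B_s}))=\ran\Dir_{B_s}$; since $\Dir_{B_s}$ is Fredholm, $T_s$ is Fredholm with $\ind T_s=\ind\Dir_{B_s}$. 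As the index is locally constant on the open set of Fredholm operators between two fixed Banach spaces, $s\mapsto\ind\Dir_{B_s}$ is constant on $[0,1]$, and evaluating at $s=0$ and $s=1$ yields $\ind(\Dir_{B_0})=\ind(\Dir_{B_s})=\ind(\Dir_{B})$.

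The main thing to get right is the persistence of elliptic regularity, and hence of Fredholmness, along the entire path: this is what makes $\ind(\Dir_{B_s})$ meaningful in the first place, and it is precisely what the first step secures via Theorem~\ref{Thm:EllDecomp} and Corollary~\ref{Cor:CoerFred}. Everything else is either quoted from the excerpt (Proposition~\ref{Prop:DeformIso}, Remark~\ref{Rem:ContDeform}) or standard (the closed graph theorem and stability of the Fredholm index).
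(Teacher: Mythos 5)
Your proof is correct and follows essentially the same route as the paper: verify $\phi_s$ is a continuous deformation via the norm equivalence of Remark~\ref{Rem:ContDeform}, transport to a continuous path of Fredholm operators on a fixed domain via Proposition~\ref{Prop:DeformIso}, and conclude by local constancy of the Fredholm index. Your write-up is more careful than the paper's in one useful respect: you explicitly check (via the decomposition with $sg$ and Theorem~\ref{Thm:EllDecomp}, then Corollary~\ref{Cor:CoerFred}) that each intermediate $B_s$ is elliptically regular and hence that $\Dir_{B_s}$ is Fredholm, a step the paper leaves implicit even though it is needed for $\ind(\Dir_{B_s})$ to be well defined along the whole path.
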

\begin{proof}
It is immediate from construction that  $\phi_{s}: B_0\to B_s$ is an isomorphism with  $\phi_0 =\id$.
By what we have said in Remark \ref{Rem:ContDeform}, it is a continuous deformation of boundary conditions.

Let $\Phi_{s}:\dom(\Dir_{B_{0}})\to\dom(\Dir_{B_{s}}) $ be the induced isomorphism from Proposition~\ref{Prop:DeformIso}.
Then, we have that $\Dir_{B_{s}}\circ\Phi_{s}:\dom(\Dir_{B_{0}})\to\Lp{2}( \cM,\cE) $ is bounded, continuous in $s$, and therefore,
\[
\ind(\Dir_{B_{s}}\circ\Phi_{s}) =\ind( B_{0})
\]
since the index is invariant under a continuous deformation.
But since $\Phi_{s}$ is an isomorphism, the dimension of the kernel and cokernel remains unchanged, so therefore $\ind(\Dir_{B_{s}}) =\ind(\Dir_{B_{s}}\circ\Phi_{s}) $.
\end{proof}

\begin{lemma}\label{Prop:FredContain}
Let $\Dir$ and $\Dir^\dagger$ be complete and coercive at infinity.
Let $B_{1}\subset B_{2}$ be elliptically regular boundary conditions.
Then, $\dim \cbrac{ \faktor{B_{2}}{B_{1}}} <\infty$ and
\[
\ind(\Dir_{B_{2}}) =\ind(\Dir_{B_{1}}) +\dim\cbrac{\faktor{B_{2}}{B_{1}}} .
\]
\end{lemma}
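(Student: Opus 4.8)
The plan is to reduce the statement to two facts: first, that $B_1 \subset B_2$ with both elliptically regular forces $\faktor{B_2}{B_1}$ to be finite-dimensional; and second, that passing from $\Dir_{B_1}$ to $\Dir_{B_2}$ is a finite-rank enlargement of the domain, whose effect on the index is computed by the standard linear-algebra lemma for operators on a common Hilbert space whose domains differ by a finite-dimensional subspace. Throughout I would use the isomorphism $\faktor{\dom(\Dir_{B_i})}{\dom(\Dmin)} \cong B_i$ from \eqref{Eq:BdyIso} and the restriction of $\gamma$, so that $\dom(\Dir_{B_2})$ and $\dom(\Dir_{B_1})$ differ precisely by a subspace mapping isomorphically onto $\faktor{B_2}{B_1}$ under $\gamma$.

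First I would establish finite-dimensionality of $\faktor{B_2}{B_1}$. By Theorem~\ref{Thm:EllDecomp}, fix an invertible bisectorial adapted boundary operator $\Ad$ and write each $B_i = \graph(g_i\rest{\SobH{\frac12}}) \oplus W_+^{(i)}$ via a graphical $\Lp 2$-decomposition with data $W_\pm^{(i)}, V_\pm^{(i)}$. The inclusion $B_1 \subset B_2$, combined with the fact (Remark~\ref{Rem:ContDeform}) that on each $B_i$ the $\checkH(\Dir)$-norm is equivalent to the $\SobH{\frac12}$-norm with constants independent of $B_i$, shows $B_1 \subset B_2$ is a closed subspace inclusion inside $\SobH{\frac12}(\partial\cM,\cE)$; moreover $B_1^\dagger \supset B_2^\dagger$ is likewise a closed subspace inclusion inside $\SobH{\frac12}(\partial\cM,\cF)$. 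Now $\dim\cbrac{\faktor{B_2}{B_1}}$ equals $\codim$ of $B_1$ in $B_2$, and the simplest route is to observe that $\Dir_{B_1}$ and $\Dir_{B_2}$ are \emph{both} Fredholm by Corollary~\ref{Cor:CoerFred} (here the coercivity at infinity of $\Dir$ and $\Dir^\dagger$ is used), so in particular $\dom(\Dir_{B_2})/\dom(\Dir_{B_1})$, which is isomorphic to $\faktor{B_2}{B_1}$, injects into the quotient of one Fredholm domain by a closed subspace containing a common finite-codimension piece — more directly, finiteness follows since both $\ker$ are finite-dimensional and $\dom(\Dir_{B_2}) = \dom(\Dir_{B_1}) + \Phi$ for $\Phi$ mapping onto $\faktor{B_2}{B_1}$, and were $\faktor{B_2}{B_1}$ infinite-dimensional one could produce, using elliptic regularity to get the compactness estimate \eqref{Eq:CompactNorm}, an infinite bounded sequence in $\dom(\Dir_{B_2})$ with no convergent subsequence modulo $\ker \Dir_{B_2}$, contradicting the closed-range/finite-kernel conclusion of Proposition~\ref{Prop:CoerFred}. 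I expect this dimension-counting step to be the main obstacle, since it is where one must genuinely combine the graphical decomposition with the coercivity estimates rather than quote a black box.

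With $N := \dim\cbrac{\faktor{B_2}{B_1}} < \infty$ in hand, the index computation is the standard one. Choose a finite-dimensional subspace $S \subset \dom(\Dir_{B_2})$ with $\dom(\Dir_{B_2}) = \dom(\Dir_{B_1}) \oplus S$ and $\dim S = N$ (possible since $\dom(\Dir_{B_1})$ is closed in $\dom(\Dir_{B_2})$ with quotient of dimension $N$). Then $\Dir_{B_2}$ is the extension of the Fredholm operator $\Dir_{B_1}$ by the finite-dimensional space $S$, and one applies the elementary fact: if $T_0 \colon \dom(T_0) \to \Hil$ is Fredholm and $T_1$ extends $T_0$ to $\dom(T_0) \oplus S$ with $\dim S = N$, then $T_1$ is Fredholm with $\ind(T_1) = \ind(T_0) + N$. (This is immediate from the exact sequences relating $\ker T_0 \subset \ker T_1$ and $\ran T_0 \subset \ran T_1$: the kernel can grow by at most $N$ and the range's codimension drops by exactly the complementary amount; alternatively cite Proposition~A.3-type bookkeeping as in \cite{BB12}.) Substituting $T_0 = \Dir_{B_1}$, $T_1 = \Dir_{B_2}$, $N = \dim\cbrac{\faktor{B_2}{B_1}}$ gives
\[
\ind(\Dir_{B_2}) = \ind(\Dir_{B_1}) + \dim\cbrac{\faktor{B_2}{B_1}},
\]
as claimed. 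Finally I would note that Fredholmness of $\Dir_{B_2}$ also follows directly from Corollary~\ref{Cor:CoerFred} since $B_2$ is elliptically regular, so the formula is between two genuine integers.
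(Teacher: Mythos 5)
Your proposal is correct and takes essentially the same route as the paper: establish Fredholmness of both $\Dir_{B_1}$ and $\Dir_{B_2}$ from Corollary~\ref{Cor:CoerFred}, observe $\dom(\Dir_{B_1}) \subset \dom(\Dir_{B_2})$, and then do linear-algebra bookkeeping with kernels and (closed) ranges to obtain both finiteness of $\dim\cbrac{\faktor{B_2}{B_1}}$ and the index shift. The ``standard fact'' you quote for a finite-dimensional domain enlargement is precisely what the paper proves by hand via orthogonal complements ($\ker(\Dir_{B_2}) = \ker(\Dir_{B_1}) \oplus^\perp K$, $R_2 = R_1 \oplus^\perp R$, etc.), with $\dim K$ and $\dim R$ bounded by $\dim\ker(\Dir_{B_2})$ and $\dim\coker(\Dir_{B_1})$ respectively.

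One caveat: your finiteness paragraph is more tangled than it needs to be. You open by invoking the graphical decomposition and the norm equivalence of Remark~\ref{Rem:ContDeform}, then shift to an informal contradiction argument via the compactness estimate \eqref{Eq:CompactNorm}; none of this is used or needed. Finiteness is an immediate consequence of Fredholmness of both operators alone: writing $S$ for a complement of $\dom(\Dir_{B_1})$ in $\dom(\Dir_{B_2})$ and $S_0 := \{s \in S : \Dir_{B_2} s \in \ran(\Dir_{B_1})\}$, one has $\dim S_0 \leq \dim\ker(\Dir_{B_2}) < \infty$ and $\dim(S/S_0) \leq \dim\coker(\Dir_{B_1}) < \infty$, so $\dim S < \infty$ without any appeal to graphical data or compact embeddings. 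This is the content of the paper's remark immediately after the lemma: only Fredholmness of $\Dir_{B_1}$ and $\Dir_{B_2}$ is used, and elliptic regularity can in principle be relaxed. Your final ``standard fact'' already encodes this, so the earlier detour is not a gap but is superfluous and slightly obscures that the finiteness and the index formula come out of the same computation.
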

\begin{proof}
The condition $B_{1}\subset B_{2}$ is equivalent to $\Dir_{B_{1}}\subset\Dir_{B_{2}}$.
Therefore, $\dom(\Dir_{B_{1}})\subset\dom(\Dir_{B_{2}}) $, $\ker(\Dir_{B_{1}})\subset\ker(\Dir_{B_{2}}) $ and $\ran(\Dir_{B_{1}})\subset\ran(\Dir_{B_{2}}) $.
Note that the ranges are closed because $B_{i}$ are Fredholm boundary conditions.

Since these are Hilbert spaces, we find orthogonal complements
\begin{align*}
\ker(\Dir_{B_{2}}) =\ker(\Dir_{B_{1}})\oplus ^{\perp}K\text{ and }\dom(\Dir_{B_{i}}) =\ker(\Dir_{B_{i}})\oplus^{\perp}R_{i} .
\end{align*}
Note that $R_{i}\cong\ran(\Dir_{B_{i}}) $ via $\Dir: R_{i}\to\ran(\Dir_{B_{i}}) $ and therefore, $R_1\subset R_2$ let
\[
R_{2} =R_{1}\oplus^{\perp}R.
\]
Therefore,
\begin{align*}
\dom(\Dir_{B_{2}}) & =\ker(\Dir_{B_{2}})\oplus R_{2} =\ker(\Dir_{B_{1}})\oplus K\oplus R_{1}\oplus R =\dom(\Dir_{B_{1}})\oplus K\oplus R.
\end{align*}
Now
\[
\faktor{B_{2}}{B_{1}}\cong\faktor{\dom(\Dir_{B_{2}})}{\dom(\Dir_{B_{1}})}\cong K\oplus R ,
\]
where the first isomorphism is readily verified, and the second follows from our construction above.

We prove that $\faktor{B_{2}}{B_{1}}$ is finite dimensional.
It suffices to prove that $K$ and $R$ are finite dimensional.
First, we note that $K$ is finite dimensional, since $\ker(\Dir_{B_{2}}) $ is finite dimensional by Fredholmness of $B_2$.

To show $R$ is finite dimensional, note
\begin{multline*}
\coker(\Dir_{B_{1}})\cong\faktor{\Lp{2}(\cM,\cE)}{\ran(\Dir_{B_{1}})}
\cong\faktor{\Lp{2}(\cM,\cE)}{\ran(\Dir_{B_{2}})}\oplus\faktor{\ran(\Dir_{B_{2}})}{\ran(\Dir_{B_{1}})}
\\
\cong\coker(\Dir_{B_{2}})\oplus\faktor{R_{2}}{R_{1}}
\cong\coker(\Dir_{B_{2}})\oplus R,
\end{multline*}
where the second isomorphism follows from the readily verifiable fact:
\[
\faktor{\Hil}{X}\cong\faktor{Y}{X}\oplus\faktor{\Hil}{Y}
\]
when $X\subset Y\subset\Hil$ are closed subspaces of a Hilbert space $\Hil$.
By the Fredholmness of $B_1$, $\coker(\Dir_{B_{1}})$ is finite dimensional and hence $R$ is finite dimensional.

Now we prove the index formula in the conclusion.
We simply calculate:
\begin{align*}
\ind(\Dir_{B_{2}}) & =\dim(\ker(\Dir_{B_{2}}) ) -\dim(\coker(\Dir_{B_{2}}) )
\\
& =\dim(\ker(\Dir_{B_{1}}) ) +\dim( K) -\dim(\coker(\Dir_{B_{1}}) ) +\dim( R)
\\
& =\ind(\Dir_{B_{1}}) +\dim( K\oplus R)
\\
& =\ind(\Dir_{B_{1}}) +\dim(\faktor{B_{2}}{B_{1}}) .
\qedhere
\end{align*}
\end{proof}

\begin{remark}
This argument only requires that $\Dir_{B_1}$ and $\Dir_{B_2}$ are Fredholm operators. 
Therefore, the assumption of elliptic regularity of $B_1$ and $B_2$ can be relaxed and replaced by the condition that $B_i$ are \emph{Fredholm boundary conditions}, by which we mean that $\Dir_{B_i}$ is a Fredholm operator. 
\end{remark} 

Combining these results, we obtain the following theorem of this section. 

\begin{theorem}
\label{Thm:DeformAps}
Let $\Dir$ and $\Dir^\dagger$ be complete and coercive at infinity. 
Fix to $\Ad$ be an adapted boundary operator and $B$ an elliptically regularly boundary condition. 
Let $W_{\pm}$ be the subspaces arising from the graphical decomposition of $B$ with respect to $\chi^+(\Ad)$ as given in Definition~\ref{Def:GraphDecomp}.
\[
\ind( \Dir_{B}) =\ind( \Dir_{\Baps(\Ad)}) +\dim( W_{+}) -\dim( W_{-}) ,
\]
\end{theorem}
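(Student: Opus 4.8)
The plan is to reduce to the case $g=0$ using the deformation results of this section, and then to compare the resulting boundary condition against $\Baps(\Ad)$ by exhibiting a \emph{single} elliptically regular boundary condition containing both of them with finite-dimensional quotients, so that the additivity of the index in Lemma~\ref{Prop:FredContain} can be applied twice.

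\emph{Step 1: reduction to $g = 0$.} First I would invoke Corollary~\ref{Cor:DeformBC} for the standard deformation $B_s = \graph(sg\rest{\SobH{\frac12}}) \oplus W_+$ to conclude $\ind(\Dir_B) = \ind(\Dir_{B_0})$, where $B_0 = (V_- \cap \SobH{\frac12}(\partial\cM,\cE)) \oplus W_+$ is the $s=0$ endpoint. Note that $B_0$ is again elliptically regular: it fits Definition~\ref{Def:GraphDecomp} with respect to $\chi^+(\Ad)$ with the same data $W_\pm, V_\pm$ but with $g$ replaced by the zero map, so Theorem~\ref{Thm:EllDecomp} applies. Hence it suffices to prove $\ind(\Dir_{B_0}) = \ind(\Dir_{\Baps(\Ad)}) + \dim W_+ - \dim W_-$.

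\emph{Step 2: the common extension.} Using $\Baps(\Ad) = \chi^-(\Ad)\SobH{\frac12}(\partial\cM,\cE) = (V_- \cap \SobH{\frac12}(\partial\cM,\cE)) \oplus W_-$, I would set
\[
\hat B := \Baps(\Ad) \oplus W_+ = \big(V_- \cap \SobH{\tfrac12}(\partial\cM,\cE)\big) \oplus W_- \oplus W_+ .
\]
Since $\chi^-(\Ad)\Lp{2}(\partial\cM,\cE)$ and $\chi^+(\Ad)\Lp{2}(\partial\cM,\cE)$ are complementary (so $\chi^+(\Ad)$ annihilates $W_-$ and $\chi^-(\Ad)$ annihilates $W_+$), these are genuine internal direct sums, one has $\Baps(\Ad) \subset \hat B$ with $\hat B/\Baps(\Ad) \cong W_+$ and $B_0 \subset \hat B$ with $\hat B/B_0 \cong W_-$; in particular both quotients are finite dimensional because $W_\pm$ are. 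Then I would check that $\hat B$ is elliptically regular by producing its graphical $\Lp{2}$-decomposition with respect to $\chi^+(\Ad)$: take $\hat W_- = \{0\}$, $\hat V_- = \chi^-(\Ad)\Lp{2}(\partial\cM,\cE)$, $\hat W_+ = W_+$, $\hat V_+ = V_+$ and $\hat g = 0$. With these choices $\hat V_- \cap \SobH{\frac12}(\partial\cM,\cE) = \chi^-(\Ad)\SobH{\frac12}(\partial\cM,\cE)$ and $\graph(\hat g\rest{\SobH{\frac12}}) \oplus \hat W_+ = \chi^-(\Ad)\SobH{\frac12}(\partial\cM,\cE) \oplus W_+ = \hat B$, while conditions (i)--(iii) of Definition~\ref{Def:GraphDecomp} hold trivially; Theorem~\ref{Thm:EllDecomp} then gives the claim.

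\emph{Step 3: additivity.} Finally I would apply Lemma~\ref{Prop:FredContain} to the two containments $\Baps(\Ad) \subset \hat B$ and $B_0 \subset \hat B$ of elliptically regular boundary conditions, obtaining $\ind(\Dir_{\hat B}) = \ind(\Dir_{\Baps(\Ad)}) + \dim W_+$ and $\ind(\Dir_{\hat B}) = \ind(\Dir_{B_0}) + \dim W_-$; subtracting and combining with Step 1 gives $\ind(\Dir_B) = \ind(\Dir_{B_0}) = \ind(\Dir_{\Baps(\Ad)}) + \dim W_+ - \dim W_-$. The main obstacle I anticipate is Step 2: one must be sure $\hat B$ is genuinely elliptically regular, so that Lemma~\ref{Prop:FredContain} applies to \emph{both} containments rather than only one, and that the two quotients really are $W_+$ and $W_-$. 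Both points reduce to careful bookkeeping with the complementary spectral projections $\chi^\pm(\Ad)$ together with the finite-dimensionality of $W_\pm$; everything else is a direct appeal to the results already established in this section.
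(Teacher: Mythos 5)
Your proposal is correct and follows essentially the same path as the paper's own proof: deform to the $g=0$ endpoint $B_0$ via Corollary~\ref{Cor:DeformBC}, introduce $\hat B = \Baps(\Ad)\oplus W_+$ (which is exactly the paper's $B_-\oplus W_+$) as a common elliptically regular extension of both $B_0$ and $\Baps(\Ad)$, identify the two finite-dimensional quotients as $W_-$ and $W_+$, and apply Lemma~\ref{Prop:FredContain} twice. The only difference is that you spell out the graphical decomposition certifying that $\hat B$ is elliptically regular, where the paper merely asserts it; this is added bookkeeping, not a different argument.
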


\begin{proof}
For notational convenience, write $B_{-}:=\Baps(\Ad)$.
Using Theorem~\ref{Thm:EllDecomp}, write $B =\graph(g\rest{\SobH{\frac12}})\oplus W_+$,  where $\chi^\pm(\Ad)\checkH(\Dir) = V_\pm\oplus W_{\pm}$.
Let $B_{0}:=V_{-}\oplus W_{+}$ as in Corollary~\ref{Cor:DeformBC} and from there, we obtain
\[
\ind( \Dir_{B}) =\ind( \Dir_{B_{0}}) .
\]
Now let us consider $B_{-}\oplus W_{+}$, which is an elliptically regular boundary condition since $W_{+}\subset\SobH{\frac{1}{2}}$ is finite dimensional and $B_{-}=\chi^{-}(\Ad)\SobH{\frac{1}{2}}(\partial \cM,\cE) $. Since $B_{-}\oplus W_{+}\supset B_{0}$ and
\[
\faktor{B_{-}\oplus W_{+}}{B_{0}} =\faktor{( V_{-}\oplus W_{-}\oplus W_{+})}{( V_{-}\oplus W_{+})}\cong W_- ,
\]
using Proposition~\ref{Prop:FredContain},
\[
\ind( \Dir_{B_{-}\oplus W_{+}}) =\ind( \Dir_{B_{0}}) +\dim( W_{-}) .
\]
Also, $B_{-}\subset B_{-}\oplus W_{+}$ and therefore,
\[
\ind( \Dir_{B_{-}\oplus W_{+}}) =\ind( B_{-}) +\dim( W_{+}) .
\]
Combining these two equations, we get
\[
\ind( \Dir_{B_{0}}) +\dim( W_{-}) =\ind( \Dir_{B_{-}}) +\dim( W_{+})
\]
which is the formula appearing in the conclusion.
\end{proof}
\begin{remark}
The subspaces $W_{\pm}$ can be described explicitly as: 
$$ W_+ = B \cap \chi^+(\Ad)\SobH{\frac12} (\partial \cM,\cE)\quad\text{and}\quad W_- = \chi^-(\Ad)\bbrac{ B^{\perp, \hatH_{\Ad}(\Dir)} \cap \SobH{\frac12}(\partial \cM,\cE)},$$
where 
$$\hatH_{\Ad}(\Dir) = \chi^-(\Ad^\ast) \SobH{-\frac12}(\partial \cM,\cE) \oplus \chi^+(\Ad^\ast)\SobH{\frac12}(\partial\cM,\cE).$$
This space is isomorphic to the dual space of $\checkH(\Dir)$, which readily follows from the fact that $\inprod{\cdot,\cdot}: \checkH_{\Ad}(\Dir) \times \hatH_{\Ad}(\Dir) \to \Co$ is a perfect paring extending the $\Lp{2}(\partial \cM,\cE)$ inner product.

The $\hatH_{\Ad}(\Dir)$ space is canonically isomorphic to  $\checkH_{\tilde{\Ad}}(\Dir^\dagger)$ via $\sym_0 = \sym_{\Dir}(\cdot, \tau)$,  where $\tilde{\Ad} := - (\sym_0^{-1})^\ast \Ad \sym_0^\ast$ is the adapted operator for $\Dir^\dagger$ canonically determined from $\Ad$.
The advantage of $\hatH_{\Ad}(\Dir)$ over $\checkH_{\tilde{\Ad}}(\Dir^\dagger)$ is that it captures the adjoint problem over $\cE \to \partial \cM$ rather than over $\cF \to \partial \cM$. 
\end{remark}

\section{Splittings and decompositions}

We now consider the way in which to relate the index of an operator to two operators obtained by cutting a manifold along a compact two-sided hypersurface.
For that, throughout this section, assume that  $\cM^{\prime}$  is a connected manifold with $\partial\cM^{\prime} =\emptyset$.

\begin{definition}[Cutting along a hypersurface]
\label{Def:Cutting}
Let $\cN\subset\cM^{\prime}$ be a two-sided compact hypersurface in $\cM^{\prime}$ (i.e.\ $\cN$ has a trivial normal bundle).
Then by ``cutting along $\cN$'', we obtain the manifold with boundary
\[
\cM:=(\cM^{\prime}\setminus\cN)\cup(\cN_{1}\sqcup\cN_{2}) ,
\]
where $\cN_{1}=\cN$, $\cN_{2}=-\cN$ (i.e.\ with opposite orientation) and with $\partial\cM=\cN_{1}\sqcup\cN_{2}$.
\end{definition}

Given a density $\mu^{\prime}$ on $\cM^{\prime}$ and bundles $\cE',\cF'\to\cM^{\prime}$, there are the naturally and canonically induced objects $\mu,\cE,\cF$ via pullback to $\cM$.
If $\Dir':\Ck{\infty}(\cM^{\prime},\cE')\to\Ck{\infty}(\cM^{\prime},\cF')$, then it is clear that there is a naturally induced operator $\Dir:\Ck{\infty}(\cM,\cE)\to\Ck{\infty}(\cM,\cF)$.

\begin{proposition}\label{Prop:Split}
We have
\[
\Lp{2}(\partial\cM, E) =\Lp{2}(\cN_1,\cE)\oplus^\perp\Lp{2}(\cN_2,\cE) =\Lp{2}(\cN,\cE)\oplus^\perp\Lp{2}(\cN,\cE) .
\]
Suppose that $\Ad_{0}$ is an invertible bisectorial adapted boundary operator on $\cN_{1}=\cN$.
Then,$-\Ad_{0}$ is an invertible bisectorial adapted boundary operator on $\cN_{2}$ and $\Ad :=\Ad_{0}\oplus(-\Ad_{0})$ is an invertible bisectorial adapted boundary operator on $\partial\cM=\cN_{1}\sqcup\cN_{2}$.
Moreover,
\begin{align*}
\checkH (\Dir)
\cong &\cbrac{\chi^{-}(\Ad_{0})\SobH{\frac{1}{2}}(\cN,\cE)\oplus\chi^{+}(\Ad_{0})\SobH{-\frac{1}{2}}(\cN,\cE) ) } \\
&\qquad\qquad\oplus\cbrac{\chi^{+}(\Ad_{0})\SobH{\frac{1}{2}}(\cN,\cE)\oplus\chi^{-}(\Ad_{0})\SobH{-\frac{1}{2}}(\cN,\cE) }.
\end{align*}
\end{proposition}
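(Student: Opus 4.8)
The plan is to assemble the claimed description of $\checkH(\Dir)$ from its definition \eqref{Eq:Czech} together with the way the spectral cuts behave under cutting along $\cN$. First I would record the orthogonal splitting of $\Lp{2}(\partial \cM, \cE)$: since $\partial \cM = \cN_1 \sqcup \cN_2$ is a disjoint union and $\cN_1 = \cN$, $\cN_2 = -\cN$ as underlying manifolds, the bundle $\cE \to \partial \cM$ restricts to copies of $\cE \to \cN$ on each piece, so $\Lp{2}(\partial\cM,\cE) = \Lp{2}(\cN_1,\cE) \oplus^\perp \Lp{2}(\cN_2,\cE) \cong \Lp{2}(\cN,\cE) \oplus^\perp \Lp{2}(\cN,\cE)$. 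The same splitting holds for the Sobolev scales $\SobH{\pm\frac12}$.

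Next I would check that $\Ad := \Ad_0 \oplus (-\Ad_0)$ is an invertible bisectorial adapted boundary operator on $\partial\cM$. Invertibility and bisectoriality pass through the orthogonal direct sum (the spectrum of the sum is the union of the spectra and the resolvent bounds combine). For the adaptedness condition \eqref{Eq:AdBd}: on $\cN_1$ the inward covector $\tau$ for $\cM$ is the inward covector $\tau_0$ used for $\Ad_0$ on $\cN$, so the symbol identity \eqref{Eq:AdBd} holds verbatim. On $\cN_2 = -\cN$, the inward direction is reversed, so the inward covector for $\cM$ is $-\tau_0$; since $\sym_\Dir(x, -\tau_0) = -\sym_\Dir(x,\tau_0)$ one gets $\sym_\Dir(x,-\tau_0)^{-1} \circ \sym_\Dir(x,\xi) = -\sym_\Dir(x,\tau_0)^{-1}\circ \sym_\Dir(x,\xi) = \sym_{-\Ad_0}(x,\xi)$, so $-\Ad_0$ is adapted on $\cN_2$ with the correct inward covector. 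Hence $\Ad$ is adapted on $\partial\cM$.

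The main step is then to identify the spectral cuts. Since $\Ad = \Ad_0 \oplus (-\Ad_0)$ is block diagonal with respect to the orthogonal splitting, its holomorphic functional calculus is block diagonal, so $\chi^{\pm}(\Ad) = \chi^{\pm}(\Ad_0) \oplus \chi^{\pm}(-\Ad_0)$. The generalised eigenspaces of $-\Ad_0$ to the left (resp.\ right) of the imaginary axis are exactly those of $\Ad_0$ to the right (resp.\ left), so $\chi^{\pm}(-\Ad_0) = \chi^{\mp}(\Ad_0)$. Therefore on $\partial\cM = \cN_1 \sqcup \cN_2$ we have $\chi^{-}(\Ad) = \chi^-(\Ad_0) \oplus \chi^+(\Ad_0)$ and $\chi^+(\Ad) = \chi^+(\Ad_0)\oplus\chi^-(\Ad_0)$. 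Substituting into \eqref{Eq:Czech},
\[
\checkH_{\Ad}(\Dir) = \chi^-(\Ad)\SobH{\frac12}(\partial\cM,\cE) \oplus \chi^+(\Ad)\SobH{-\frac12}(\partial\cM,\cE),
\]
and splitting each summand along $\cN_1 \sqcup \cN_2$ and regrouping the $\cN_1$-terms together and the $\cN_2$-terms together yields exactly the two bracketed pieces in the statement, using $\cN_1 = \cN_2 = \cN$ as manifolds. I expect the only genuinely delicate point to be the sign bookkeeping for $\tau$ on the reversed component $\cN_2$ and the resulting swap $\chi^\pm(-\Ad_0) = \chi^\mp(\Ad_0)$; everything else is formal manipulation of orthogonal direct sums and the definition \eqref{Eq:Czech}. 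One should also note, as already observed after \eqref{Eq:BdyIso}, that $\checkH(\Dir)$ is a fixed set (the range of $\gamma$) and $\Ad$ only fixes the norm, so the displayed isomorphism is an equivalence of the relevant Banach-space topologies rather than an equality of sets.
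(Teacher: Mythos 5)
Your proposal is correct and follows essentially the same route as the paper's proof: split $\Lp{2}(\partial\cM,\cE)$ orthogonally along $\partial\cM=\cN_1\sqcup\cN_2$, observe that the block-diagonal structure of $\Ad=\Ad_0\oplus(-\Ad_0)$ gives $\chi^{\pm}(\Ad)=\chi^{\pm}(\Ad_0)\oplus\chi^{\mp}(\Ad_0)$ using $\chi^{\pm}(-\Ad_0)=\chi^{\mp}(\Ad_0)$, then substitute into \eqref{Eq:Czech} and regroup by component. If anything you spell out more than the paper does: the paper's proof is a one-line display that goes directly to the regrouping, leaving the verification that $-\Ad_0$ is adapted on $\cN_2$ (via the sign change $\tau_0\mapsto-\tau_0$ of the inward covector) and the invertibility/bisectoriality of the direct sum implicit, whereas you check both explicitly; that is a useful clarification rather than a divergence.
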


\begin{proof}
The splitting of $\Lp{2}(\partial\cM,\cE)$ follows immediately from the fact that $\cM =\cN_1\disunion\cN_2$. 

For the splitting of $\checkH(\Dir)$, note that
\begin{align*}
\chi^-(\Ad)\SobH{\frac12}(\partial\cM)&\oplus\chi^+(\Ad)\SobH{-\frac12}(\partial\cM) \\
&=\cbrac{\chi^{-}(\Ad_{0})\SobH{\frac{1}{2}}(\cN_{1},\cE)\oplus\chi^{-}(-\Ad_{0})\SobH{\frac{1}{2}}(\cN_{2},\cE) }  \\
&\qquad\qquad\oplus\cbrac{\chi^{+}(\Ad_{0})\SobH{-\frac{1}{2}}(\cN_1,\cE)\oplus\chi^{+}( -\Ad_{0})\SobH{-\frac{1}{2}}(\cN_2,\cE) }\\
&\cong\cbrac{\chi^{-}(\Ad_{0})\SobH{\frac{1}{2}}(\cN,\cE)\oplus\chi^{+}(\Ad_{0})\SobH{-\frac{1}{2}}(\cN,\cE) } \\
&\qquad\qquad\oplus\cbrac{\chi^{+}(\Ad_{0})\SobH{\frac{1}{2}}(\cN,\cE)\oplus\chi^{-}(\Ad_{0})\SobH{-\frac{1}{2}}(\cN,\cE) }.
\qedhere
\end{align*}
\end{proof}

Since $\cN$ is compact, completeness and coercivity at infinity on $\cM$ for $\Dir$ is an inherited property from $\cM'$. 
The following is immediate from using Definition~\ref{Def:Coercive} along with the fact that $\cN$ is compact.

\begin{lemma}
\label{Lem:CoerInherited}
$\Dir,\Dir^{\dagger}$ are complete and coercive at infinity  if and only if  $\Dir'$ and $(\Dir') ^{\dagger}$ are.
\end{lemma}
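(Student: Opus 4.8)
The plan is to reduce everything to the definition of coercivity at infinity (Definition~\ref{Def:Coercive}) together with the fact that cutting along $\cN$ changes the manifold only in an arbitrarily small neighbourhood of the compact set $\cN$. The key observation is that a test section $u \in \Ck{\infty}(\cM, \cE)$ supported away from a compact $\cK$ can, after enlarging $\cK$ to absorb $\cN$, be regarded as a section on $\cM' \setminus (\cK \cup \cN)$, and conversely; since $\Dir$ and $\Dir'$ agree on the open set where they are both defined, the $\Lp{2}$-norms of $u$ and $\Dir u$ are literally unchanged under this identification (recall $\mu$ is the pullback of $\mu'$, and the cutting locus $\cN$ has measure zero). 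This makes the coercivity estimates on $\cM$ and $\cM'$ equivalent once one is careful to include $\cN$ inside the compact exceptional set.

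First I would fix notation: let $\pi: \cM \to \cM'$ be the canonical map collapsing $\cN_1 \sqcup \cN_2$ back onto $\cN$, which restricts to a diffeomorphism $\cM \setminus \partial \cM \to \cM' \setminus \cN$ and intertwines the bundles, densities, and operators by construction. For the forward direction, suppose $\Dir'$ (and $(\Dir')^\dagger$) is coercive at infinity with constant $C$ and compact exceptional set $\cK' \subset \cM'$. Set $\cK := \pi^{-1}(\cK' \cup \cN) \subset \cM$, which is compact since $\cN$ is compact and $\cK'$ is compact. For any $u \in \Ck{\infty}(\cM, \cE)$ with $\spt u \subset \cM \setminus \cK$, we have $\spt u \subset \cM \setminus \partial \cM$, so $u$ pushes forward to a smooth section $\pi_\ast u$ on $\cM' \setminus \cN$ extending by zero (since $u$ vanishes near $\cN$) to a section in $\Ck{\infty}(\cM', \cE')$ supported in $\cM' \setminus (\cK' \cup \cN) \subset \cM' \setminus \cK'$. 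Applying the coercivity estimate for $\Dir'$ and using $\norm{\pi_\ast u}_{\Lp{2}(\cM', \cE')} = \norm{u}_{\Lp{2}(\cM, \cE)}$ and $\norm{\Dir' \pi_\ast u}_{\Lp{2}} = \norm{\Dir u}_{\Lp{2}}$ (both from $\mu = \pi^\ast \mu'$ and the agreement of the operators away from $\cN$) gives exactly the coercivity estimate for $\Dir$ with the same constant $C$ and exceptional set $\cK$. The same argument applied to $(\Dir')^\dagger$, noting that the induced formal adjoint $\Dir^\dagger$ on $\cM$ is the one induced from $(\Dir')^\dagger$, yields coercivity at infinity for $\Dir^\dagger$.

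For the converse, suppose $\Dir$ and $\Dir^\dagger$ are coercive at infinity on $\cM$ with constant $C$ and exceptional compact $\cK \subset \cM$. Enlarge $\cK$ to $\cK \cup \partial \cM$ (still compact since $\partial \cM = \cN_1 \sqcup \cN_2$ is compact), so we may assume $\partial \cM \subset \cK$ and hence $\pi(\cK)$ is a compact subset of $\cM'$ containing $\cN$. Given $v \in \Ck{\infty}(\cM', \cE')$ with $\spt v \subset \cM' \setminus \pi(\cK)$, in particular $\spt v \cap \cN = \emptyset$, so $v$ pulls back via $\pi^{-1}$ on $\cM' \setminus \cN$ to a section $\pi^\ast v \in \Ck{\infty}(\cM, \cE)$ supported in $\cM \setminus \cK$, and the two relevant $\Lp{2}$-norms again match. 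The coercivity estimate for $\Dir$ then transfers to one for $\Dir'$ with the same constant and exceptional set $\pi(\cK)$; similarly for $(\Dir')^\dagger$.

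The completeness equivalence I would handle in parallel and more briefly, since Lemma~2.3(i) of \cite{BBan} (quoted in the excerpt) already lets us work with $\Ck[c]{\infty}$ density: one checks that $\Ck[c]{\infty}(\cM', \cE')$ is dense in $\dom((\Dir')_{\max})$ iff $\Ck[c]{\infty}(\cM, \cE)$ is dense in $\dom(\Dir_{\max})$, using that a compactly supported section on either side can, after multiplying by a cutoff that is $1$ off a neighbourhood of $\cN$ and taking the neighbourhood small, be modified to avoid $\cN$ with an error small in graph norm — here one uses that $\Dir$ and $\Dir'$ differ only near $\cN$ and that $\cN$ has measure zero, plus the fact that the graph-norm closure of $\Ck{\infty}$ sections supported away from $\cN$ recaptures the full maximal domain because $\cN$ is a finite-codimension submanifold (capacity argument). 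The main obstacle, and the only place requiring genuine care, is precisely this last density/capacity point: arguing that no spurious boundary behaviour along $\cN$ can appear, i.e.\ that a maximal-domain section on $\cM'$ is graph-norm approximable by sections vanishing near $\cN$. This is exactly the kind of statement underpinning the cutting constructions in \cite{BB12} and \cite{BBan}, so I would either cite the relevant lemma there or reproduce the standard cutoff-and-mollify argument, being careful that the Sobolev traces on $\cN_1$ and $\cN_2$ are what make the glued maximal domain on $\cM'$ match $\dom((\Dir')_{\max})$.
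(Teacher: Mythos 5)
Your coercivity argument is correct, and it is essentially the argument the paper has in mind: since $\cN$ is compact, enlarge the exceptional compact set (on $\cM'$ to $\cK'\cup\cN$, on $\cM$ to $\cK\cup\partial\cM$), and then test sections supported away from that set transfer isometrically between $\cM$ and $\cM'$ because $\mu=\pi^{\ast}\mu'$, the operators agree, and $\cN$ is $\mu'$-null. The paper treats this as immediate and you have filled in exactly the right details.

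The completeness part, however, has a genuine gap. You claim that a compactly supported section can be cut off to vanish in a shrinking neighbourhood of $\cN$ with graph-norm error tending to zero, ``because $\cN$ is a finite-codimension submanifold (capacity argument).'' This is false for codimension one. Already for $\Dir'=\partial_{x_1}$ on $\R^{n}$ and $\cN=\set{x_1=0}$, a cutoff $\theta_\epsilon$ that vanishes for $|x_1|<\epsilon$ and is $1$ for $|x_1|>2\epsilon$ has $|\theta_\epsilon'|\sim\epsilon^{-1}$ on a slab of volume $\sim\epsilon$, so $\norm{\Dir'(\theta_\epsilon u)-\theta_\epsilon\Dir'u}_{\Lp{2}}^2\sim\epsilon^{-1}\norm{u\rest{\cN}}^2\to\infty$ unless $u$ already vanishes on $\cN$. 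Equivalently, the graph-norm closure of sections supported away from $\cN$ is the minimal domain of $\Dir$ on the cut manifold $\cM$, which is strictly smaller than $\dom(\Dir_{\max})$; indeed, that strictness is precisely what makes the whole boundary-value-problem theory nontrivial. The set $\cN$ would need to have zero $\SobH{1}$-capacity (hence codimension $\geq 2$) for your claim to hold.

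The fix is simple and does not need any capacity statement: apply the cutoff to the \emph{target} section rather than to the approximants. Given $v\in\dom((\Dir')_{\max})$, pick $\chi'\in\Ck[c]{\infty}(\cM',[0,1])$ with $\chi'\equiv1$ on a neighbourhood of $\cN$; then $v=\chi'v+(1-\chi')v$, where $\chi'v$ is already compactly supported (so nothing to do), and $(1-\chi')v$ is supported away from $\cN$, hence pulls back to $\tilde v\in\dom(\Dir_{\max})$ supported away from $\partial\cM$. Completeness of $\Dir$ gives compactly supported $u_k\to\tilde v$ in graph norm; now choose a fixed $\theta\in\Ck{\infty}(\cM,[0,1])$ vanishing near $\partial\cM$ and $\equiv1$ on $\spt\tilde v$. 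Multiplication by $\theta$ is bounded on graph norm, and $\theta\tilde v=\tilde v$, so $\theta u_k\to\tilde v$, and each $\theta u_k$ is compactly supported and vanishes near $\partial\cM$, hence pushes forward to a compactly supported element of $\dom((\Dir')_{\max})$. The converse direction works symmetrically. This is the ``cutoff-and-commutator'' argument that the compactness of $\cN$ makes available, and it is what the paper's one-line justification is alluding to.
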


In order to connect information regarding the operator $\Dir'$ on $\cM'$, the boundaryless manifold, and $\cM$ obtained from cutting along $\cN$, we define the following.

\begin{definition}[Matching condition]
\label{Def:Matching}
The subspace 
\[
\Bmatch:=\set{ (u,u) \in \SobH{\frac12}(\cN_1, \cE) \oplus \SobH{\frac12}(\cN_2, \cE) : u\in\SobH{\frac{1}{2}}(\cN,\cE) }  \subset \checkH(\Dir)
\]
is called the \emph{matching condition}.
\end{definition}

\begin{remark}
Note that $\SobH{\frac12}(\partial\cM,\cE)$ is a dense subspace of $\checkH(\Dir)$.
Therefore, it can never be a boundary condition (i.e. closed). 
However, since $\partial \cM =\cN_1\disunion\cN_2$, as we have already see in Proposition~\ref{Prop:Split}, 
\begin{multline*}
\chi^-(\Ad)\SobH{\frac12}(\partial\cM, \cE) 
=\chi^{-}(\Ad_{0})\SobH{\frac{1}{2}}(\cN_{1},\cE)\oplus\chi^{-}(-\Ad_{0})\SobH{\frac{1}{2}}(\cN_{2},\cE)\\
=\chi^{-}(\Ad_{0})\SobH{\frac{1}{2}}(\cN,\cE)\oplus\chi^{+}(\Ad_{0})\SobH{\frac{1}{2}}(\cN,\cE) 
=\SobH{\frac12}(\cN,\cE).
\end{multline*}
That means that, in this special situation, $\SobH{\frac12}(\cN,\cE)$ is canonically identified to a closed subspace of $\checkH(\Dir)$.
It is for this reason that the subspace $\Bmatch$ has a chance of being an boundary condition. 
This is asserted below.
\end{remark}

\begin{lemma}\label{Lem:MatchingEll}
$\Bmatch$ is an elliptically regular boundary condition.
\end{lemma}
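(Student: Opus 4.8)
The plan is to verify Definition~\ref{Def:GraphDecomp} directly for $\Bmatch$ with respect to the splitting $\Ad = \Ad_0 \oplus (-\Ad_0)$ on $\partial\cM = \cN_1 \sqcup \cN_2$, and then invoke Theorem~\ref{Thm:EllDecomp} to conclude elliptic regularity. The key observation is the identification, already recorded in the remark preceding the lemma, that
\[
\chi^-(\Ad)\Lp{2}(\partial\cM,\cE) = \chi^-(\Ad_0)\Lp{2}(\cN,\cE) \oplus \chi^+(\Ad_0)\Lp{2}(\cN,\cE) \cong \Lp{2}(\cN,\cE),
\]
where the first summand sits on $\cN_1$ and the second on $\cN_2$ (and symmetrically, $\chi^+(\Ad)\Lp{2}(\partial\cM,\cE) \cong \Lp{2}(\cN,\cE)$ with the roles of $\cN_1,\cN_2$ swapped). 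Under these identifications, the $\Lp{2}$-orthogonal projection $P_-$ of $\Lp{2}(\partial\cM,\cE) = \Lp{2}(\cN,\cE)\oplus\Lp{2}(\cN,\cE)$ onto $\chi^-(\Ad)\Lp{2}$ acts on a pair $(u_1,u_2)$ by $(u_1, u_2) \mapsto (\chi^-(\Ad_0)u_1, \chi^+(\Ad_0)u_2)$, and similarly for $P_+$. So the first task is to write $\Bmatch$, which is cut out by the single linear constraint $u_1 = u_2 =: u$ with $u \in \SobH{\frac12}(\cN,\cE)$, in graphical form relative to this splitting.

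First I would choose $W_\pm = 0$: since $\Bmatch$ is visibly a closed graph over its own $P_-$-projection (see below) with no finite-dimensional correction needed, the trivial choice of complementary subspaces $W_\pm = 0$, $V_\pm = \chi^\pm(\Ad)\Lp{2}(\partial\cM,\cE)$ satisfies conditions (i) and (ii) of Definition~\ref{Def:GraphDecomp} vacuously. Then I would exhibit the map $g\colon V_- \to V_+$. Given $(u_1,u_2) \in \Bmatch$ we have $u_1 = u_2 = u$, so $P_-(u_1,u_2) = (\chi^-(\Ad_0)u, \chi^+(\Ad_0)u)$ and $P_+(u_1,u_2) = (\chi^+(\Ad_0)u, \chi^-(\Ad_0)u)$. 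From $P_-(u_1,u_2)$ we recover $u = \chi^-(\Ad_0)u + \chi^+(\Ad_0)u$ by adding the two components (viewed in $\Lp{2}(\cN,\cE)$), and then $P_+(u_1,u_2)$ is determined. Concretely, writing an element of $V_- = \chi^-(\Ad)\Lp{2}(\partial\cM,\cE)$ as a pair $(a,b)$ with $a \in \chi^-(\Ad_0)\Lp{2}(\cN,\cE)$ (on $\cN_1$) and $b \in \chi^+(\Ad_0)\Lp{2}(\cN,\cE)$ (on $\cN_2$), set
\[
g(a,b) := \bigl(\chi^+(\Ad_0)(a+b),\ \chi^-(\Ad_0)(a+b)\bigr) \in V_+.
\]
One checks immediately that $\graph(g) = \Bmatch$ at the $\Lp{2}$-level and that $\Bmatch = \set{v + gv : v \in V_- \cap \SobH{\frac12}} \oplus W_+$ with $W_+ = 0$, since $a + b \in \SobH{\frac12}(\cN,\cE)$ if and only if both $a,b \in \SobH{\frac12}$ (the spectral cuts $\chi^\pm(\Ad_0)$ are order-zero pseudodifferential operators, hence bounded on each $\SobH{s}$, so they preserve $\SobH{\frac12}$ and the decomposition $u = \chi^-(\Ad_0)u + \chi^+(\Ad_0)u$ is topological on $\SobH{\frac12}(\cN,\cE)$).

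It remains to check the mapping properties in Definition~\ref{Def:GraphDecomp}(iii): that $g$ is $\Lp{2}$-bounded, that $g(V_- \cap \SobH{\frac12}) \subset V_+ \cap \SobH{\frac12}$, and the symmetric statement for $g^\ast$. Boundedness of $g$ on $\Lp{2}$ and preservation of $\SobH{\frac12}$ both follow from the boundedness of $\chi^\pm(\Ad_0)$ on $\Lp{2}(\cN,\cE)$ and on $\SobH{\frac12}(\cN,\cE)$; the adjoint $g^\ast$ is computed the same way using that the spectral cuts of $\Ad_0^\ast$ are again order-zero pseudodifferential operators, so that $g^\ast$ has exactly the same structure with $\Ad_0$ replaced by $\Ad_0^\ast$ and the $\cN_1/\cN_2$ roles bookkept accordingly. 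I expect the only point requiring genuine care — the main (mild) obstacle — is getting the bookkeeping of the spectral projections and the $\cN_1 \leftrightarrow \cN_2$ identifications exactly right, in particular confirming that the $\chi^-(-\Ad_0) = \chi^+(\Ad_0)$ identity (used implicitly above and in Proposition~\ref{Prop:Split}) threads through correctly so that the constraint $u_1 = u_2$ really does translate into a bona fide \emph{graph} over $V_-$ rather than merely a closed subspace; once that is pinned down, conditions (i)--(iii) hold and Theorem~\ref{Thm:EllDecomp} gives that $\Bmatch$ is elliptically regular.
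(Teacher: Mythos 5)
Your proposal is correct and follows essentially the same route as the paper's proof: it verifies Definition~\ref{Def:GraphDecomp} directly with $W_\pm=\{0\}$, $V_\pm=\chi^\pm(\Ad)\Lp{2}(\partial\cM,\cE)$, and the swap map $g$ (your formula $g(a,b)=(\chi^+(\Ad_0)(a+b),\chi^-(\Ad_0)(a+b))$ simplifies to $g(a,b)=(b,a)$ since $\chi^+(\Ad_0)a=0$ and $\chi^-(\Ad_0)b=0$, which is exactly the off-diagonal identity matrix the paper writes down), and then invokes Theorem~\ref{Thm:EllDecomp}.
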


\begin{proof}
The full assertion, including that $\Bmatch$ is a boundary condition, can be obtained if we can write it as a graph as in Definition~\ref{Def:GraphDecomp} and by invoking Theorem~\ref{Thm:EllDecomp}.

In light of Proposition~\ref{Prop:Split}, define:
\begin{align*}
V_{-} & :=\chi^{-}(\Ad_{0})\Lp{2}(\cN,\cE)\oplus\chi^{+}(\Ad_{0})\Lp{2}(\cN,\cE) ,
\\
V_{+} & :=\chi^{+}(\Ad_{0})\Lp{2}(\cN,\cE)\oplus\chi^{-}(\Ad_{0})\Lp{2}(\cN,\cE) ,
\\
W_{\pm} & :=\set{ 0} .
\end{align*}
Moreover, define $g: V_{-}\to V_{+}$ by
\[ g=
\begin{pmatrix}
&\id
\\
\id &
\end{pmatrix} .
\]
Then,
\begin{multline*}
\graph( g\rest{\SobH{\frac12}} )  =\set{
\begin{pmatrix}
u
\\
v
\end{pmatrix}
+g
\begin{pmatrix}
u
\\
v
\end{pmatrix}
: u,v\in\SobH{\frac{1}{2}}(\cN,\cE) }
\\
 =\set{
\begin{pmatrix}
u
\\
v
\end{pmatrix}
+
\begin{pmatrix}
v
\\
u
\end{pmatrix}
: u,v\in\SobH{\frac{1}{2}}(\cN,\cE) }
 =\set{
\begin{pmatrix}
u+v
\\
u+v
\end{pmatrix}
: u,v\in\SobH{\frac{1}{2}}(\cN,\cE) }
 =\Bmatch.
\end{multline*}
By Theorem~\ref{Thm:EllDecomp}, we have that $\Bmatch$ is an elliptically regular boundary condition.
\end{proof}

\begin{remark}
Historically, attention has been focused on  elliptically regular boundary conditions  that can be  obtained as ranges of pseudo-differential operators of order zero acting on $\SobH{\frac12}(\partial\cM,\cE)$.
Pseudo-differential operators of order zero are pseudo-local.
By definition of $\Bmatch$, information at $\cN_1$ is matched to that of $\cN_2$.
Since $\cN_1$ and $\cN_2$ can be very far from each other, pseudo-locality is precluded for the matching condition and hence cannot be obtained as a range of a pseudo-differential projector of order zero. 
This demonstrates the power and usefulness of the graphical decomposition.
It is also a fundamental and important observation that lead to the development of the graphical decomposition in \cite{BB12}. 
\end{remark}

Using the results for deformation in the previous section, we now reduce the matching condition to the APS condition. 

\begin{lemma}\label{Lem:MatchingDeform}
Let $\Ad_{0}$ be an invertible bisectorial adapted boundary operator on $\cN$ and $\Ad :=\Ad_{0}\oplus( -\Ad_{0}) $, the induced invertible bisectorial adapted boundary operator on $\partial\cM =\cN_{1}\sqcup\cN_{2}$.
Then
\[
\Baps(\Ad) = \chi^-(\Ad) \SobH{\frac12}(\partial \cM, \cE) = \SobH{\frac{1}{2}}(\cN,\cE)
\]
and
\[
\ind(\Dir_{\Bmatch}) =\ind(\Dir_{\Baps(\Ad) }) .
\]
\end{lemma}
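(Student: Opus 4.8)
The plan is to prove the two displayed assertions in turn, the first being essentially a bookkeeping identity and the second following from the deformation machinery of Section~4. For the first identity, I would start from the splitting in Proposition~\ref{Prop:Split}. Since $\Ad = \Ad_0 \oplus (-\Ad_0)$ on $\partial\cM = \cN_1 \sqcup \cN_2$, and since $\chi^-(-\Ad_0) = \chi^+(\Ad_0)$ (the spectral cut to the left of the imaginary axis for $-\Ad_0$ is the cut to the right for $\Ad_0$), one computes directly
\[
\chi^-(\Ad)\SobH{\frac12}(\partial\cM,\cE) = \chi^-(\Ad_0)\SobH{\frac12}(\cN_1,\cE) \oplus \chi^+(\Ad_0)\SobH{\frac12}(\cN_2,\cE).
\]
Identifying $\cN_1 = \cN = -\cN_2$ and using that $\chi^-(\Ad_0) + \chi^+(\Ad_0) = \id$ on $\SobH{\frac12}(\cN,\cE)$, the right-hand side is canonically $\SobH{\frac12}(\cN,\cE)$. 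This is exactly the computation already recorded in the remark following Definition~\ref{Def:Matching}, so the first equality is immediate; I would simply cite that remark. By definition $\Baps(\Ad) = \chi^-(\Ad)\SobH{\frac12}(\partial\cM,\cE)$, so the first displayed line follows.

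For the index equality, the strategy is to exhibit a continuous deformation of elliptically regular boundary conditions from $\Bmatch$ to $\Baps(\Ad)$ and apply Corollary~\ref{Cor:DeformBC}. From the proof of Lemma~\ref{Lem:MatchingEll}, $\Bmatch = \graph(g\rest{\SobH{\frac12}}) \oplus W_+$ with respect to $\chi^+(\Ad)$, where $W_\pm = \set{0}$, $V_\pm$ are as defined there, and $g = \begin{pmatrix} & \id \\ \id & \end{pmatrix}: V_- \to V_+$. Since $W_+ = W_- = \set{0}$, Theorem~\ref{Thm:DeformAps} gives $\ind(\Dir_{\Bmatch}) = \ind(\Dir_{\Baps(\Ad)})$ directly: indeed $\dim W_+ - \dim W_- = 0$. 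So the cleanest route is to invoke Theorem~\ref{Thm:DeformAps} with $B = \Bmatch$, whose graphical decomposition with vanishing $W_\pm$ was established in Lemma~\ref{Lem:MatchingEll}, and the conclusion is immediate. Alternatively, and more in the spirit of Corollary~\ref{Cor:DeformBC}, set $B_s := \graph(sg\rest{\SobH{\frac12}}) \oplus W_+ = \graph(sg\rest{\SobH{\frac12}})$; then $B_0 = V_- \cong \SobH{\frac12}(\cN,\cE) = \Baps(\Ad)$ by the first part, $B_1 = \Bmatch$, each $B_s$ is elliptically regular by Theorem~\ref{Thm:EllDecomp}, and Corollary~\ref{Cor:DeformBC} gives $\ind(\Dir_{B_0}) = \ind(\Dir_{B_1})$.

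I expect no genuine obstacle here; the only point requiring a little care is the identification $B_0 = V_- = \Baps(\Ad)$ as a boundary condition, which needs the observation that $V_- = \chi^-(\Ad_0)\Lp{2}(\cN,\cE) \oplus \chi^+(\Ad_0)\Lp{2}(\cN,\cE) = \Lp{2}(\cN,\cE)$ and, upon intersecting with $\SobH{\frac12}$, reproduces $\chi^-(\Ad)\SobH{\frac12}(\partial\cM,\cE)$ under the canonical identification of $\partial\cM$ with two copies of $\cN$ — precisely the first displayed equality. Everything else is a direct appeal to Corollary~\ref{Cor:DeformBC} (equivalently Theorem~\ref{Thm:DeformAps}), with the completeness and coercivity-at-infinity hypotheses on $\Dir, \Dir^\dagger$ supplying the Fredholmness needed for the index to be defined and deformation-invariant.
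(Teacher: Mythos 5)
Your proof is correct and follows essentially the same route as the paper: the first equality by unpacking $\chi^-(\Ad_0 \oplus (-\Ad_0))$ across the splitting of $\partial\cM$, and the index equality via the deformation $B_s = \graph(sg\rest{\SobH{\frac12}})$ and Corollary~\ref{Cor:DeformBC}, which is exactly the paper's argument. Your primary route via Theorem~\ref{Thm:DeformAps} with $W_\pm = \{0\}$ is merely a repackaging of the same deformation (that theorem is itself a consequence of Corollary~\ref{Cor:DeformBC}), so there is no substantive difference.
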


\begin{proof}
We have
\begin{align*}
\Baps(\Ad) & =\chi^{-}(\Ad)\SobH{\frac{1}{2}}(\partial\cM,\cE)
\\
& =\chi^{-}(\Ad_{0}\oplus( -\Ad_{0}) )\SobH{\frac{1}{2}}(\cN_{1}\sqcup\cN_{2},\cE)
\\
& =\chi^{-}(\Ad_{0})\SobH{\frac{1}{2}}(\cN_{1},\cE)\oplus\chi^{-}( -\Ad_{0})\SobH{\frac{1}{2}}(\cN_{2},\cE)
\\
& =\chi^{-}(\Ad_{0})\SobH{\frac{1}{2}}(\cN,\cE)\oplus\chi^{+}(\Ad_{0})\SobH{\frac{1}{2}}(\cN,\cE)
\\
& =\SobH{\frac{1}{2}}(\cN,\cE) .
\end{align*}
Now, for the choices of $V_{\pm}$ used in Lemma~\ref{Lem:MatchingEll}, i.e., $V_{\pm}:=\chi^{\pm}(\Ad_{0})\Lp{2}(\cN_{1},\cE)\oplus\chi^{\mp}(\Ad_{0})\Lp{2}(\cN_{2},\cE) $ and for $g$ as defined there, let
\[
\Bmatch^{s}:=\graph( sg\rest{\SobH{\frac{1}{2}}}) .
\]
Clearly this is a continuous deformation of $\Bmatch$ to $\Bmatch^{0}=\Baps(\Ad) $ and so by Corollary~\ref{Cor:DeformBC}, the conclusion follows.
\end{proof}

Combining these results, we can now prove the following generalised splitting theorem.

\begin{theorem} [Splitting theorem]\label{Thm:Splitting}
Let $(\cM^{\prime},\mu')$  be a connected boundaryless measured manifold  and $(\cE',\mh^{\cE'}),(\cF', \mh^{\cF'}) \to\cM'$ be Hermitian bundles, carrying a first-order differential operator $\Dir':\Ck{\infty}(\cM',\cE')\to\Ck{\infty}(\cM',\cF')$ which is complete and coercive at infinity.
Suppose that $\cN$ is a two-sided hypersurface and let $\cM,\mu,\cE,\cF,\Dir$, denote the induced objects be as above obtained from cutting $\cM'$ along $\cN$. 
Let $B_1 \subset \SobH{\frac12}(\cN_1,E)$ and $B_2 \subset \SobH{\frac12}(\cN_2,E)$  be a closed subspaces. 
Identifying $\SobH{\frac{1}{2}}(\cN_{i},E) $ with $\SobH{\frac{1}{2}}(\cN,\cE) $, assume that $B_1, B_2 \subset \SobH{\frac12}(\cN,E)$ are complementary subspaces satisfying
\[
B_{1}\oplus B_{2}=\SobH{\frac{1}{2}}(\cN,\cE),
\]
where $\oplus$ is the internal direct sum in $\SobH{\frac12}(\cN,\cE)$. 
Noting $\Baps(\Ad) = \chi^-(\Ad) \SobH{\frac12}(\partial \cM,\cE) = \SobH{\frac12}(\cN,\cE) \subset \checkH(\Dir)$, we have
\[
\ind(\Dir') =\ind(\Dir_{B_{1}\oplus B_{2}}) .
\]

\end{theorem}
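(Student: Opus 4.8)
The plan is to identify the operator in the statement with an Atiyah--Patodi--Singer operator on the cut manifold and then transport the index to $\cM'$ via the matching condition. Since the $\oplus$ in the hypothesis is the internal direct sum inside $\SobH{\frac12}(\cN,\cE)$, we have $B_1\oplus B_2 = \SobH{\frac12}(\cN,\cE)$ as a set, and --- as already exploited in the proof of \lemref{Lem:MatchingDeform}, on the basis of Proposition~\ref{Prop:Split} --- this is exactly the closed subspace $\chi^-(\Ad)\SobH{\frac12}(\partial\cM,\cE) = \Baps(\Ad)$ of $\checkH(\Dir)$. Hence $\Dir_{B_1\oplus B_2} = \Dir_{\Baps(\Ad)}$ and it suffices to prove $\ind(\Dir') = \ind(\Dir_{\Baps(\Ad)})$. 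By \lemref{Lem:CoerInherited} the cut operator $\Dir$ and its formal adjoint are complete and coercive at infinity; since $\Bmatch$ is elliptically regular by \lemref{Lem:MatchingEll}, \corref{Cor:CoerFred} makes $\Dir_{\Bmatch}$ Fredholm, and \lemref{Lem:MatchingDeform} already gives $\ind(\Dir_{\Bmatch}) = \ind(\Dir_{\Baps(\Ad)})$. So the theorem reduces to the single identity $\ind(\Dir') = \ind(\Dir_{\Bmatch})$.

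To establish this I would construct a gluing isomorphism between the domains. Completeness together with $\partial\cM' = \emptyset$ gives $\Dir' = (\Dir')_{\max}$, and interior elliptic regularity gives $\dom(\Dir')\subset\SobH[loc]{1}(\cM',\cE')$; the elliptic semi-regularity of $\Bmatch$ (valid since $\Bmatch$ is elliptically regular) gives $\dom(\Dir_{\Bmatch})\subset\SobH[loc]{1}(\cM,\cE)$, exactly as in the proof of Proposition~\ref{Prop:CoerFred}. Restricting a section of $\cM'$ to $\cM = (\cM'\setminus\cN)\cup(\cN_1\sqcup\cN_2)$ carries $\SobH[loc]{1}(\cM',\cE')$ into $\SobH[loc]{1}(\cM,\cE)$ and produces sections whose traces on $\cN_1$ and on $\cN_2$ both equal the trace on $\cN\subset\cM'$, hence lie in $\Bmatch$; conversely, a section of $\SobH[loc]{1}(\cM,\cE)$ with matching traces on $\cN_1$ and $\cN_2$ reassembles across $\cN$ into a section of $\SobH[loc]{1}(\cM',\cE')$, by the standard fact that $\SobH{1}$-regularity survives gluing along a hypersurface with agreeing traces. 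These two assignments are mutually inverse and bounded, they intertwine $\Dir'$ with $\Dir_{\Bmatch}$ (the operators agree off the $\mu'$-null set $\cN$), and under the canonical identification $\Lp{2}(\cM',\cF') = \Lp{2}(\cM,\cF)$ they map $\ran(\Dir')$ onto $\ran(\Dir_{\Bmatch})$. Consequently $\ker(\Dir')\cong\ker(\Dir_{\Bmatch})$ and $\coker(\Dir')\cong\coker(\Dir_{\Bmatch})$, which delivers $\ind(\Dir') = \ind(\Dir_{\Bmatch})$ and, incidentally, the Fredholmness of $\Dir'$.

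I expect the delicate point to be the gluing step itself: one must confirm that the orientation reversal $\cN_2 = -\cN$ makes the matched-trace condition $\gamma_1 u = \gamma_2 u$ precisely the one needed to recompose a section on $\cM'$, and, crucially, that applying $\Dir'$ to the recomposed section contributes no distributional term supported on $\cN$. This is exactly what the $\SobH[loc]{1}$-regularity of $\dom(\Dir_{\Bmatch})$ --- itself a consequence of the elliptic regularity of $\Bmatch$ from \lemref{Lem:MatchingEll} --- is there to rule out: once the two pieces are $\SobH[loc]{1}$ up to $\cN$ with coinciding traces, a localization to a collar of $\cN$ together with integration by parts, in which the boundary contributions from the two sides cancel because the traces agree, shows that the glued section lies in $\SobH[loc]{1}(\cM',\cE')$ and that its image under $\Dir'$ is the $\Lp{2}$-section assembled from the two halves. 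Everything else --- the reduction $B_1\oplus B_2 = \Baps(\Ad)$ and the equality $\ind(\Dir_{\Bmatch}) = \ind(\Dir_{\Baps(\Ad)})$ --- is bookkeeping with the isomorphisms of Proposition~\ref{Prop:Split} and with \lemref{Lem:MatchingDeform}.
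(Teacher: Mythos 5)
Your proposal follows essentially the same route as the paper: introduce the matching condition $\Bmatch$, identify $\dom(\Dir_{\Bmatch})$ with $\dom(\Dir')$ under the canonical pullback, use Lemmas~\ref{Lem:CoerInherited}, \ref{Lem:MatchingEll}, \ref{Lem:MatchingDeform} to deform $\Bmatch$ to $\Baps(\Ad) = \SobH{\frac12}(\cN,\cE) = B_1 \oplus B_2$, and conclude. Where the paper dispatches the step $\ind(\Dir_{\Bmatch}) = \ind(\Dir')$ as ``easy to see'' after noting $\dom(\Dir_{\Bmatch}\circ\Phi) = \dom(\Dir')$, you spell out the gluing argument --- $\SobH[loc]{1}$-regularity of both domains, matching traces on $\cN_1$ and $\cN_2$, and the absence of a distributional contribution along $\cN$ --- which is precisely the content being taken for granted there. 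This is the same proof with that gap filled, not a different argument.
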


\begin{proof}
Let $\Bmatch $ be the matching condition as defined in Definition~\ref{Def:Matching}.
On identifying the pullback sections, say via a map $\Phi$ from $\cE'$ to $\cE$ , we get
\[
\dom(\Dir_{\Bmatch}\circ\Phi) =\dom(\Dir') .
\]
By the assumption that $\Dir'$ and $(\Dir')^\dagger$ are complete and coercive at infinity, by Lemma~\ref{Lem:CoerInherited}, the same holds for the induced $\Dir$ and $\Dir^\dagger$. 
Moreover, it is easy to see that 
\[
\ind(\Dir_{\Bmatch}) =\ind(\Dir') .
\]
From Lemma~\ref{Lem:MatchingDeform},
\[
\ind(\Dir_{\Bmatch}) =\ind(\Dir_{\SobH{\frac12}(\cN,\cE)}) =\ind(\Dir_{B_{1}\oplus B_{2}}), 
\]
which is the required conclusion.
\end{proof}

\begin{remark}
This result is a direct generalisation of the result in the case when $\Dir$ admits a self-adjoint adapted boundary operator $\Ad$ on $\cN$.
We can let $B_{1}\subset\SobH{\frac{1}{2}}(\cN,\cE) $ be elliptically regular on $\cN_{1}$ and $B_{2}:=B_{1}^{\perp,\Lp{2}}\cap\SobH{\frac{1}{2}}(\cN,\cE) $ considered as a boundary condition on $\cN_{2}$.

Now choose $\Ad_{0}$ invertible self-adjoint adapted boundary operator, which can always be obtained by subtracting a sufficiently small number from the self-adjoint boundary adapted operator whose existence we assumed.
Set
\begin{align*}
V_{-} & :=V_{-,1}\oplus V_{-,2},
\\
V_{+} & :=V_{+,1}\oplus V_{+,2},
\\
V_{\pm1} & :=\chi^{\pm}(\Ad_{0})\Lp{2}(\cN,\cE)
\\
V_{\pm,2} & :=\chi^{\pm}( -\Ad_{0})\Lp{2}(\cN,\cE) =\chi^{\mp}(\Ad_{0})\Lp{2}(\cN,\cE) .
\end{align*}
Write
\begin{align*}
B_{1} & =W_{+,1}\oplus\graph( g_{1}: V_{-,1}\to V_{+,1})\cap\SobH{\frac{1}{2}}(\cN,\cE) ,
\\
B_{2} & =W_{+,2}\oplus\graph( g_{2}: V_{-,2}\to V_{+,2})\cap\SobH{\frac{1}{2}}(\cN,\cE) .
\end{align*}
But since $B_{1}\perp B_{2}$ in $\Lp{2}$, we have that $V_{\pm,2}=V_{\mp1}$, $W_{\pm,2}=W_{\mp,1}$ and $g_{2}=-g_{1}^{\ast}$.

The adjoint boundary condition for $B_{1}\oplus B_{2}$ is $B_{2}\oplus B_{1}$.
Therefore, $B_1\oplus B_2 =\SobH{\frac12}(\cN,\cE)$ and so Theorem~\ref{Thm:Splitting} applies.
\end{remark} 

\begin{definition}[Separation]
\label{Def:Separate}
Suppose $\cN\subset\cM'$ is a two-sided hypersurface and that  by cutting along $\cN$, we obtain
$\cM' =\cM_1\disunion\cM_2$ with $\partial\cM_1  =\partial\cM_2 =\cN$.
Then we say that $\cN$ separates $\cM'$ (into $\cM_1$ and $\cM_2$).
\end{definition}

As before, we obviously obtain induced objects $\cE_i,\cF_i,\Dir_i$ et cetera via pullback to $\cM_i$. 
The splitting theorem applies in a particular and useful way when $\cN$ \emph{separates} $\cM^{\prime}$, leading to a decomposition theorem.

\begin{corollary}[Decomposition]
\label{Cor:Decomp}
Assume the hypothesis of Theorem~\ref{Thm:Splitting}. 
In addition, assume that $\cN$ separates $\cM'$ into $\cM_1$ and $\cM_2$, and $\Ad_{0}$ be an invertible bisectorial adapted boundary operator on $\cN$ pointing into $\partial\cM_1$.
Let
\begin{align*}
B_{1} &:=\chi^{-}(\Ad_{0})\SobH{\frac{1}{2}}(\partial\cM_1,E) =\chi^{-}(\Ad_{0})\SobH{\frac{1}{2}}(\cN,\cE) ,\quad\text{and}\\
B_{2} &:=\chi^{-}( -\Ad_{0})\SobH{\frac {1}{2}}(\partial\cM_2,E)=\chi^{+}(\Ad_{0})\SobH{\frac{1}{2}}(\cN,\cE). 
\end{align*}
Then,
\[
\ind(\Dir) =\ind(\Dir_{1, B_{1}}) +\ind(\Dir_{2,B_{2}}).
\]
\end{corollary}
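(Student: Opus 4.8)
The plan is to realise the left-hand side by applying the Splitting Theorem (Theorem~\ref{Thm:Splitting}) to the prescribed subspaces $B_1,B_2$, and then, using that $\cN$ \emph{separates} $\cM'$, to observe that the boundary value problem for $\Dir$ on the cut manifold $\cM$ under the condition $B_1\oplus B_2$ decouples into the two problems for $\Dir_1$ on $\cM_1$ under $B_1$ and for $\Dir_2$ on $\cM_2$ under $B_2$; the stated formula then follows by combining the two. I do not anticipate a genuine obstacle: the corollary is essentially bookkeeping on top of Theorem~\ref{Thm:Splitting}, and the only delicate point is making sure the disjoint decomposition $\cM=\cM_1\disunion\cM_2$ is compatible with the whole apparatus ($\Dir_{\max}$, the boundary trace map, $\checkH(\Dir)$, and the graphical decomposition), which is already the content behind Proposition~\ref{Prop:Split}.

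First I would check that $B_1$ and $B_2$ satisfy the hypotheses of Theorem~\ref{Thm:Splitting}. Since $\Ad_0$ points into $\partial\cM_1$, the operator $-\Ad_0$ is the invertible bisectorial adapted boundary operator pointing into $\partial\cM_2$, and $\chi^-(-\Ad_0)=\chi^+(\Ad_0)$; hence, under the canonical identification $\SobH{\frac12}(\cN_i,\cE)\cong\SobH{\frac12}(\cN,\cE)$, the subspaces are $B_1=\chi^-(\Ad_0)\SobH{\frac12}(\cN,\cE)$ and $B_2=\chi^+(\Ad_0)\SobH{\frac12}(\cN,\cE)$. As $\Ad_0$ is invertible bisectorial, $\chi^-(\Ad_0)$ and $\chi^+(\Ad_0)$ are complementary bounded projections on $\SobH{\frac12}(\cN,\cE)$, so $B_1$ and $B_2$ are closed complementary subspaces with internal direct sum $B_1\oplus B_2=\SobH{\frac12}(\cN,\cE)=\Baps(\Ad)$, which is exactly the configuration required by Theorem~\ref{Thm:Splitting}. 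It thus yields $\ind(\Dir')=\ind(\Dir_{B_1\oplus B_2})$, the left-hand side being what the statement abbreviates $\ind(\Dir)$.

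It remains to decouple $\Dir_{B_1\oplus B_2}$. Because $\cN$ separates, cutting produces $\cM=\cM_1\disunion\cM_2$ with $\partial\cM=\cN_1\disunion\cN_2=\partial\cM_1\disunion\partial\cM_2$, induced operator $\Dir=\Dir_1\oplus\Dir_2$, and adapted operator $\Ad=\Ad_0\oplus(-\Ad_0)$, so $\Dir_{\max}$, $\Dir_{\min}$, the boundary trace map and $\checkH(\Dir)\cong\checkH(\Dir_1)\oplus\checkH(\Dir_2)$ all split over the two components (as in the computation behind Proposition~\ref{Prop:Split}). Imposing $B_1\oplus B_2$ on $\partial\cM$ therefore means imposing $B_1$ on $\cM_1$ and $B_2$ on $\cM_2$, i.e.\ $\Dir_{B_1\oplus B_2}=\Dir_{1,B_1}\oplus\Dir_{2,B_2}$ on $\Lp{2}(\cM_1,\cE)\oplus\Lp{2}(\cM_2,\cE)$, whence kernels and cokernels split and $\ind(\Dir_{B_1\oplus B_2})=\ind(\Dir_{1,B_1})+\ind(\Dir_{2,B_2})$. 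Each summand is Fredholm: $B_i$ is the APS condition for $\pm\Ad_0$ and hence elliptically regular, while completeness and coercivity at infinity descend from $\Dir',(\Dir')^\dagger$ to $\Dir,\Dir^\dagger$ via Lemma~\ref{Lem:CoerInherited} and onto each component $\Dir_i,\Dir_i^\dagger$, so Corollary~\ref{Cor:CoerFred} applies on each $\cM_i$. Chaining the two displayed identities gives the claim.
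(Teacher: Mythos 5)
Your proposal is correct and matches the paper's proof, which the author states tersely as ``immediate from $\Dir' = \Dir_1 \oplus \Dir_2$ and invoking Theorem~\ref{Thm:Splitting}.'' You have simply spelled out the two steps the paper compresses: that $B_1$, $B_2$ are complementary closed subspaces of $\SobH{\frac12}(\cN,\cE)$ as required by Theorem~\ref{Thm:Splitting}, and that the disjointness $\cM = \cM_1 \disunion \cM_2$ forces $\Dir_{B_1\oplus B_2} = \Dir_{1,B_1} \oplus \Dir_{2,B_2}$, whence the index is additive.
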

\begin{proof}
This is immediate from  $\Dir' =\Dir_1\oplus\Dir_2$ and invoking Theorem~\ref{Thm:Splitting}.
\end{proof}

\section{The Phi and relative index theorems}

In this section, we prove the $\Phi$ and relative index theorems.
For that, we will fix two measured manifolds $(\cM_1,\mu_1)$ and $(\cM_2,\mu_2)$.
These will be equipped with Hermitian vector bundles $(\cE_i,\mh^{\cE_i})\to\cM_i$ and $(\cF_i,\mh^{F_i})\to\cM_i$ carrying first-order elliptic  differential operators $\Dir_i :\Ck{\infty}(\cM_i,\cE_i)\to\Ck{\infty}(\cM_i,\cF_i)$.

\begin{definition}[Agree outside closed subset]
\label{Def:Agree}
Let $\cM_{1},\cM_{2}$ be manifolds and $(\cE_{i},h^{\cE_{i}}) ,(\cF_{i},h^{\cF_{i}})\to\cM_{i}$ Hermitian vector bundles.
Let $\Dir_{i}:\Ck{\infty}(\cM_i,\cE_{i})\to\Ck{\infty}(\cM_i,\cF_{i})$ and $\cK_{i}\subset\cM_{i}$ a closed subset.
Then we say that $\Dir_{1}$ and $\Dir_{2}$ agree outside $\cK_{1},\cK_{2}$ if they are related by vector bundle isometries $\cE_{1}\rest{\cM_{1}\setminus\cK_{1}}\cong\cE_{2}\rest{\cM_{2}\setminus\cK_{2}}$ and $\cF_{1}\rest{\cM_{1}\setminus\cK_{1}}\cong\cF_{2}\rest{\cM_{2}\backslash\cK_{2}}$
satisfying  the following.

\begin{enumerate}[label=(\roman*)]
\item
There is a diffeomorphism $f:\cM_{1}\setminus\cK_{1}\to\cM_{2}\setminus\cK_{2}$.

\item
There exist vector bundle isometries
\[
I_{\cE}:\cE_{1}\rest{\cM_{1}\setminus\cK_{1}}\to\cE_{2}\rest{\cM_{2}\setminus\cK_{2}}
\quad\text{and}\quad
I_{\cF}:\cF_{1}\rest{\cM_{1}\setminus\cK_{1}}\to\cF_{2}\rest{\cM_{2}\setminus\cK_{2}}
\]
over $f$.
That is, $I_\cE$ and $I_\cF$ are fibrewise linear isometries such that\ the following diagrams commute:
\[
\begin{array} [c]{ccc}
\cE_{1}\rest{\cM_{1}\setminus\cK_{1}} &\overset{I_{\cE}}{\longrightarrow} &\cE_{2}\rest{\cM_{2}\setminus\cK_{2}}
\\
\downarrow & &\downarrow
\\
\cM_{1}\setminus\cK_{1} &\overset{f}{\longrightarrow} &\cM_{2}\setminus\cK_{2}
\end{array}
\quad\text{and}\quad
\begin{array} [c]{ccc}
\cF_{1}\rest{\cM_{1}\setminus\cK_{1}} &\overset{I_{\cF}}{\longrightarrow} &\cF_{2}\rest{\cM_{2}\setminus\cK_{2}}
\\
\downarrow & &\downarrow
\\
\cM_{1}\setminus\cK_{1} &\overset{f}{\longrightarrow} &\cM_{2}\setminus\cK_{2}.
\end{array}
\]

\item The operators $\Dir_1$ and $\Dir_2$ are related by $I_\cE, I_\cF$ and $f$.
Explicitly,
\[
I_{\cF}\circ (\Dir_{1}u)\circ f^{-1}=\Dir_{2}( I_{\cE}\circ u\circ f^{-1})
\]
for all $u\in\Ck{\infty}(\cM_{1}\setminus\cK_{1},\cE_{1}) $.
\end{enumerate}
\end{definition}

\begin{theorem}[$\Phi$-relative index theorem]
\label{Thm:PhiRelInd}
Let $(\cM_{1},\mu_1)$, $(\cM_{2},\mu_2)$ be measured manifolds without boundary and $(\cE_{i},\mh^{\cE_{i}}),(\cF_{i},\mh^{\cF_{i}})\to\cM$ Hermitian bundles with $\Dir_{i}:\Ck{\infty}(\cM_i,\cE_i)\to\Ck{\infty}(\cM_i,\cF_i)$, first-order elliptic and coercive at infinity.

Suppose the following: 
\begin{enumerate}[label=(\roman*)]
\item  $\cK_i\subset\cM_i$ such that $\Dir_1,\Dir_2$ agree outside of $\cK_1,\cK_2$ as in in Definition~\ref{Def:Agree};
\item the densities  $\mu_i$ satisfy $\mu_{1}=f^{\ast}\mu_{2}$ on $\cM_1\setminus\cK_1$;
\item there exists a compact two-sided hypersurfaces $\cN_{1}$ separating $\cM_{1}=\cM_{1}^{\prime}\cup\cM_{1}^{\prime\prime}$ with $\partial\cM_{1}^{\prime}=\partial\cM_{1}^{\prime\prime}=\cN_{1}$ and $\cK_{1}\subset\interior\cM_{1}^{\prime}$.
\end{enumerate}

Then, $\cN_2 := f(\cN_1)$ separates $\cM_2 =\cM_{2}^{\prime}\cup\cM_{2}^{\prime\prime}$ with $\cK_2\subset\interior\cM_2'$.

Denote the induced operators on $\cM_i'$ and $\cM_i''$ from $\Dir_i$ by $\Dir_{i}^{\prime}$ and $\Dir_{i}^{\prime\prime}$ respectively and fix an invertible bisectorial adapted boundary operator $\Ad$ to $\Dir_{1}^{\prime}$ on $\cM_{1}^{\prime}$.
Let $B_{1}:=\chi^{-}(\Ad)\SobH{\frac{1}{2}}(\partial\cM_{1},\cE_{1}) $ and $B_{2}$ be identified with $B_{1}$ under $( I_{E},I_{F},f) $.
Then $\Dir_{i},\Dir_{i,B_{i}}^{\prime}$ are Fredholm operators and
\[
\ind(\Dir_{1}) -\ind(\Dir_{2}) =\ind(\Dir_{1,B_{1}}^{\prime}) -\ind(\Dir_{2,B_{2}}^{\prime}) .
\]

\end{theorem}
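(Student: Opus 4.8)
The plan is to reduce everything to the decomposition theorem (Corollary~\ref{Cor:Decomp}) applied on both $\cM_1$ and $\cM_2$, and then exploit the fact that $\Dir_1$ and $\Dir_2$ genuinely coincide on the ``outside'' pieces $\cM_1''$ and $\cM_2''$. First I would verify the preliminary claims in the statement: since $f:\cM_1\setminus\cK_1\to\cM_2\setminus\cK_2$ is a diffeomorphism and $\cN_1\subset\cM_1\setminus\cK_1$ (because $\cK_1\subset\interior\cM_1'$ forces $\cN_1=\partial\cM_1'$ to be disjoint from $\cK_1$), the image $\cN_2=f(\cN_1)$ is a compact two-sided hypersurface in $\cM_2$; two-sidedness is transported by $f$, and $f$ carries the decomposition $\cM_1=\cM_1'\cup\cM_1''$ to a decomposition $\cM_2=\cM_2'\cup\cM_2''$ with $\cK_2\subset\interior\cM_2'$. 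Here one must be slightly careful: $f$ is only defined away from $\cK_i$, but since $\cK_1\subset\interior\cM_1'$, the piece $\cM_1''$ lies entirely in the domain of $f$, so $\cM_2'':=f(\cM_1'')$ makes sense, and $\cM_2'$ is whatever remains; then $\partial\cM_2'=\partial\cM_2''=\cN_2$.

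Next, since $\Dir_1,\Dir_2$ are coercive at infinity on the closed manifolds $\cM_i$, by Lemma~\ref{Lem:CoerInherited} the cut operators $\Dir_i'$ on $\cM_i'$ and $\Dir_i''$ on $\cM_i''$ (together with their formal adjoints) are complete and coercive at infinity, hence for any elliptically regular boundary condition the associated realisations are Fredholm by Corollary~\ref{Cor:CoerFred}; in particular $\Dir_{i,B_i}'$ is Fredholm once we check $B_i$ is elliptically regular. The condition $B_1=\chi^-(\Ad)\SobH{\frac12}(\partial\cM_1',\cE_1)$ is exactly an APS condition $\Baps(\Ad)$, which is elliptically regular by the remark following Theorem~\ref{Thm:EllDecomp}; and $B_2$, being identified with $B_1$ via the isometries $(I_\cE,I_\cF,f)$, is likewise elliptically regular for $\Dir_2'$ — this uses that $\cN_1$ and its tubular neighbourhood lie in $\cM_1\setminus\cK_1$ where the two operators are isometrically identified, so the adapted boundary operators, spectral projections, and Sobolev spaces all correspond. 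By the same token one obtains a complementary condition $B_i^c$ on $\cM_i''$ (the APS condition for $-\Ad$, i.e. $\chi^+(\Ad)\SobH{\frac12}(\cN_i,\cE_i)$ transported to $\partial\cM_i''$), and Corollary~\ref{Cor:Decomp} gives
\[
\ind(\Dir_i) = \ind(\Dir_{i,B_i}') + \ind(\Dir_{i,B_i^c}'').
\]

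Now comes the crux: the two ``exterior'' operators $\Dir_{1,B_1^c}''$ and $\Dir_{2,B_2^c}''$ are \emph{unitarily equivalent}. Indeed, $f$ restricts to a diffeomorphism $\cM_1''\to\cM_2''$, the isometries $I_\cE,I_\cF$ cover it, and by hypothesis (ii) the densities match, $\mu_1=f^\ast\mu_2$, so pullback is an $\Lp2$-isometry; by Definition~\ref{Def:Agree}(iii) this intertwines $\Dir_1''$ with $\Dir_2''$, and since $B_1^c$ is transported precisely to $B_2^c$ by construction, it intertwines the realisations. Hence $\ind(\Dir_{1,B_1^c}'')=\ind(\Dir_{2,B_2^c}'')$. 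Subtracting the two decomposition formulas,
\[
\ind(\Dir_1)-\ind(\Dir_2) = \ind(\Dir_{1,B_1}') + \ind(\Dir_{1,B_1^c}'') - \ind(\Dir_{2,B_2}') - \ind(\Dir_{2,B_2^c}'') = \ind(\Dir_{1,B_1}') - \ind(\Dir_{2,B_2}'),
\]
which is the claim. The main obstacle, and the step deserving the most care, is the bookkeeping in the previous paragraph — making precise that the identification $(I_\cE,I_\cF,f)$ (defined only on $\cM_i\setminus\cK_i$) carries the \emph{adapted boundary operator, its spectral cuts $\chi^\pm$, the Sobolev scale $\SobH{\pm\frac12}$, and the matching/APS conditions} on $\cN_1$ to their counterparts on $\cN_2$, so that $B_2$ and $B_2^c$ are genuinely well-defined and that the exterior realisations are honestly unitarily equivalent and not merely ``the same up to lower-order terms.'' Everything else is an assembly of results already established in the preceding sections.
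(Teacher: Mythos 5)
Your proposal is correct and follows essentially the same route as the paper: apply Corollary~\ref{Cor:Decomp} to both $\cM_1$ and $\cM_2$ with the complementary APS conditions $\chi^{\pm}(\Ad)\SobH{\frac12}$ on the two sides of the cut, observe that the exterior realisations $\Dir_{i,B_i''}''$ are intertwined by $(I_\cE,I_\cF,f)$ together with the density identification $\mu_1=f^\ast\mu_2$ so their indices cancel, and subtract. Your write-up in fact spells out the bookkeeping for why the identification transports the adapted operator, spectral cuts, Sobolev scale, and boundary conditions in slightly more detail than the paper does, but the argument is the same.
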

\begin{proof}
Since $\cM_1\setminus\cK_1$ is diffeomorphic to $\cM_2\setminus\cK_2$, it is clear that $\cN_2 = f(\cN_1)$ separates $\cM_2 =\cM_{2}^{\prime}\cup\cM_{2}^{\prime\prime}$ and $\cK_2\subset\interior\cM_2'$.

By Proposition~\ref{Prop:Split}, since $\Ad$ is an invertible bisectorial adapted boundary operator on $\partial\cM_{1}^{\prime}$, we obtain that $-\Ad$ is an invertible bisectorial adapted boundary operator on $\partial\cM_{1}^{\prime\prime}$.
On setting $B_{1}^{\prime}:=B_{1}$
\[
B_{1}^{\prime\prime}:=\chi^{+}(\Ad)\SobH{\frac{1}{2}}(\partial\cM_{1}^{\prime\prime},E) =\chi^{+}(\Ad)\SobH{\frac {1}{2}}(\cN_1,E)
\]
from Corollary~\ref{Cor:Decomp}, we obtain
\[
\ind(\Dir_{1}) =\ind(\Dir_{1,B_{1}^{\prime}}^{\prime}) +\ind(\Dir_{1,B_{1}^{\prime\prime}}^{\prime\prime}) .
\]

Let $B_{2}^{\prime}:=B_{2}$ and $B_{2}^{\prime\prime}$ be the boundary condition $B_{2}^{\prime\prime}$ pulled across to $\partial\cM_{2}^{\prime\prime}$ via $( I_{E},I_{F},f) $, we get
\[
\ind(\Dir_{2}) =\ind(\Dir_{2,B_{2}^{\prime}}^{\prime}) +\ind(\Dir_{2,B_{2}^{\prime\prime}}^{\prime\prime}) .
\]
Since $\Dir_{1}$ and $\Dir_{2}$ agree outside $\cK_{1}$ and $\cK_{2}$, and $\mu_1 = f^\ast \mu_2$ on $\cM_1 \setminus \cK_1$, 
\[
\ind(\Dir_{1,B_{1}''}^{\prime\prime}) =\ind(\Dir_{2,B_{2}''}^{\prime\prime}) .
\]
By taking the difference we obtain the conclusion.
\end{proof}

To obtain the relative index theorem, we  want to express the right hand side of this index theorem in terms of  the local quantities $\alpha_{0,\Dir_i}$ as described in the hypothesis of this theorem.
These quantities are access by  embedding the manifold $\cM_i'$ inside a larger closed  manifold and appropriately extending $\Dir_i$.

\begin{lemma}\label{Lem:SmartDouble}
Let $\cM_{1}$ and $\cM_{2}$ be two compact manifolds with boundary such that\ $\Dir_{1},\Dir_{2}$ elliptic agree outside $\cK_{i}\subset\interior\cM_{i}$.
Then there are $\tilde{\cM}_{i}$ compact with $\partial\tilde{\cM}_{i}=\emptyset$ such that\ the following hold.

\begin{enumerate}[label=(\roman*)]
\item
$\cM_{i}\subset\tilde{\cM}_{i}$,

\item
$\cE_{i}\subset\tilde{\cE}_{i}$, $\mh^{\tilde{\cE}_{i}}$ smooth with $\mh^{\tilde{\cE}_{i}}\rest{\cM_{i}}=\mh^{\cE_{i}}$,

\item
$\tilde{\Dir}_{i}$ elliptic such that\ $\Dir_{i}$ and $\tilde{\Dir}_{1}$ and $\tilde {D}_{2}$ agree outside $\cK_{1},\cK_{2}$,

\item
$\tilde{\Dir}_{i}\rest{\cK_{i}}=\Dir_{i}\rest{\cK_{i}}$.
\end{enumerate}
\end{lemma}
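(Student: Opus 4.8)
The plan is to realise $\tilde\cM_i$ as $\cM_i$ with its boundary collar pushed outward and then capped off by a single piece $\cP$, chosen once and for all so that $\cM_1$ and $\cM_2$ receive \emph{the same} collar extension and \emph{the same} cap; this is what will preserve the identification of $\Dir_1$ and $\Dir_2$ away from the $\cK_i$. First I would fix a collar $U_i\cong(-\epsilon,0]\times\partial\cM_i$ of $\partial\cM_i$ inside $\cM_i$; since $\cK_i\subset\interior\cM_i$ is closed and $\cM_i$ is compact, we may take $U_i$ disjoint from $\cK_i$, so $U_i\subset\cM_i\setminus\cK_i$. On $U_i$ the operator has the usual collar normal form $\Dir_i=\sigma_i(t)(\partial_t+A_i(t))$, with $\sigma_i(t)$ a smooth family of fibrewise isomorphisms $\cE_i\to\cF_i$ and $A_i(t)$ a smooth family of first-order operators on $\partial\cM_i$, and ellipticity of $\Dir_i$ is equivalent to invertibility of $\sigma_i(t)$ together with ellipticity of $A_i(t)$. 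Because $(I_\cE,I_\cF,f)$ identifies $(\cE_1,\cF_1,\Dir_1)$ with $(\cE_2,\cF_2,\Dir_2)$ over $\cM_i\setminus\cK_i\supset U_i$, and $f$ restricts to the fixed identification $\partial\cM_1\cong\partial\cM_2$, these collar data coincide under the identification; write $(\sigma(t),A(t))$ for the common data and $(\sigma_0,A_0):=(\sigma(0),A(0))$.

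Next I would enlarge the collar: extend $(\sigma(t),A(t))$ smoothly from $t\in(-\epsilon,0]$ to $t\in(-\epsilon,1]$, keeping $\sigma(t)$ fibrewise invertible and $A(t)$ elliptic, and arranging $\sigma(1)=-\sigma_0$ and $A(1)=-A_0$. This is possible: the fibrewise invertible bundle maps $\cE_i|_{\partial\cM_i}\to\cF_i|_{\partial\cM_i}$ have connected fibres (the bundles are complex, so each fibre is a copy of $\mathrm{GL}(\Co^N)$), so $\sigma_0$ and $-\sigma_0$ are joined by a smooth path of invertibles; and the elliptic first-order operators on $\partial\cM_i$ form an open, convex (hence path-connected) subset of an affine space, joining $A_0$ to $-A_0$. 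Doing this with the \emph{common} data $(\sigma,A)$ produces $\hat\cM_i\supset\cM_i$, compact with $\partial\hat\cM_i\cong\partial\cM_i$, carrying a Hermitian extension of $(\cE_i,\cF_i)$ over the new collar (trivially, as pullbacks of the boundary bundles, with a smooth metric extending $\mh^{\cE_i}$) and an elliptic extension $\hat\Dir_i$ of $\Dir_i$; the identification $(I_\cE,I_\cF,f)$ extends by the identity over the new collar, so $\hat\Dir_1$ and $\hat\Dir_2$ still agree outside $\cK_i$, and the boundary collar of $\hat\Dir_i$ now has conormal symbol $-\sigma_0$ and tangential part $-A_0$.

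Finally I would choose, once and for all, a compact manifold $\cP$ with $\partial\cP\cong\partial\cM_1\cong\partial\cM_2$ — for instance $\cP=\cM_1$, which bounds $\partial\cM_1$ tautologically — together with a fixed Hermitian bundle pair on $\cP$ restricting to the boundary bundles and a fixed smooth elliptic first-order operator $\Dir_\cP$ whose collar normal form near $\partial\cP$ is $\sigma_0(\partial_s+A_0)$ (again $\cP=\cM_1$ with a suitably fattened collar of $\Dir_1$ supplies this). Set $\tilde\cM_i:=\hat\cM_i\cup_{\partial}\cP$, with $\tilde\cE_i,\tilde\cF_i$ the glued bundles and $\tilde\Dir_i$ equal to $\hat\Dir_i$ on $\hat\cM_i$ and to $\Dir_\cP$ on $\cP$. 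Across the gluing hypersurface the two collar normal forms match smoothly: the sign reversal of the conormal direction at the seam is exactly compensated by having extended the collar data on $\hat\cM_i$ so as to end at $(-\sigma_0,-A_0)$, so that $\tilde\Dir_i$ is a smooth elliptic operator on the closed manifold $\tilde\cM_i$ extending $\Dir_i$. Conditions (i) and (ii) are then immediate; (iv) holds because $\cK_i\subset\interior\cM_i$ and $\tilde\Dir_i|_{\cM_i}=\Dir_i$; and (iii) holds because $\tilde\cM_i\setminus\cK_i=(\hat\cM_i\setminus\cK_i)\cup\cP$ and the diffeomorphism $f$ (which is the identity near $\partial\cM_i$), extended by the identity on the new collar and on $\cP$ and covered by $I_\cE,I_\cF$ extended by the identity, intertwines $\tilde\Dir_1$ and $\tilde\Dir_2$; it also restricts to $\Dir_i$ on $\cM_i\setminus\cK_i$, giving ``$\Dir_i$ and $\tilde\Dir_1$ and $\tilde\Dir_2$ agree outside $\cK_1,\cK_2$'' in the sense of Definition~\ref{Def:Agree}.

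The main obstacle is the \emph{smoothness} of $\tilde\Dir_i$ across the gluing hypersurface: a naive double of $\cM_i$ does not carry a smooth extension of $\Dir_i$, because the conormal symbol picks up a sign under the orientation reversal along the doubling locus. This is precisely what the fattened collar together with the choice of a path of collar data ending at $(-\sigma_0,-A_0)$ is designed to fix, and the argument rests on the connectedness of the general linear group of the (complex) fibres and the affine structure of the tangential operators. A secondary point, automatically handled, is that the collar extension and the cap $\cP$ must be chosen independently of $i$ so that Definition~\ref{Def:Agree} survives for $\tilde\Dir_1,\tilde\Dir_2$; this costs nothing because the collar data of $\Dir_1$ and $\Dir_2$ are literally identified outside the $\cK_i$.
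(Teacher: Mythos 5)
Your approach is genuinely different from the paper's, and you have correctly put your finger on the real issue that the paper's proof glosses over. The paper simply forms $\tilde\cM_i := \cM_i \cup_{U_i}\cM_1^{\mathrm{2nd}}$ and asserts that ``on doubling, we keep the smooth coefficients.'' But as you observe, a naive double does \emph{not} carry a smooth extension of $\Dir_i$: in a collar $(-\epsilon,0]\times\partial\cM_i$ the operator has the form $\sigma(t)(\partial_t + A(t))$, and upon doubling the normal coordinate changes sign, so the two one-sided limits at the gluing locus are $\sigma_0(\partial_t+A_0)$ and $\sigma_0(-\partial_t + A_0)$, which do not match even at order zero. Your remedy — extending the collar by a path of data ending at $(-\sigma_0,-A_0)$ and then capping with a fixed $\cP$ whose collar data is $(\sigma_0,A_0)$, doing this once and for all so that both $\tilde\cM_1,\tilde\cM_2$ receive the same cap — is a clean and correct strategy, and it yields a proof of precisely what is claimed, provided the interpolating path exists.

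The gap is in the existence of that path. You justify the interpolation of $A(t)$ from $A_0$ to $-A_0$ by asserting that ``elliptic first-order operators on $\partial\cM_i$ form an open, convex (hence path-connected) subset of an affine space.'' This is false. Convex combinations of elliptic operators are not elliptic: already at the symbol level, $\tfrac12\sigma_{A_0} + \tfrac12(-\sigma_{A_0}) = 0$, so the very path $(1-2t)A_0$ one would write down from convexity degenerates at $t = \tfrac12$. Worse, ellipticity of $A(t)$ on $\partial\cM$ is not even the right condition: what you actually need is that $\sigma(t)(\partial_t + A(t))$ remains elliptic on the collar, which requires that the symbol $\sigma_{A(t)}(\xi')$ have spectrum disjoint from $i\R$ for every $\xi'\neq 0$ (the condition that $A(t)$ is, in the paper's language, \emph{adapted}). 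The natural alternative path $e^{i\pi t}A_0$ preserves ellipticity on $\partial\cM$ (for complex bundles) but rotates the symbol spectrum through $i\R$, so it fails the adaptedness condition. A correct argument must show that $A_0$ and $-A_0$ lie in the same path component of \emph{adapted} operators. This requires an extra topological input — when $\dim\partial\cM\geq 2$ one can use connectedness of the cosphere bundle together with $\sigma_A(-\xi')=-\sigma_A(\xi')$ to conclude the positive and negative spectral subspaces of $\sigma_{A_0}(\xi')$ have equal rank, and then build a homotopy swapping them — but none of this is supplied or even signalled by the convexity claim. Until that step is repaired, the collar extension in your proposal is not constructed.

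As a comparison: the paper's own proof sidesteps this by pure omission — it never checks smoothness of $\tilde\Dir_i$ across the doubling locus, so the reader must supply an invertible-double-type construction with twisted bundle identifications. Your proposal is more honest about what needs to be proved and would, once the path-existence step is fixed (or replaced by the standard trick of putting the formal adjoint $\Dir_1^\dagger$ on the second copy with bundles identified across $\partial$ via $\sigma_0$, after first deforming $\Dir_i$ to product form in the collar), constitute a complete proof.
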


\begin{proof}
Take $\cM_{i}$ and set $\cM_{1}^{\mathrm{2nd}} :=\cM_1$ as the second copy of $\cM_1$.
We will glue this second copy $\cM_1^{\mathrm{2nd}}$ to $\cM_i$, regardless of whether $i=1$ or $i=2$.
That is, define
\[
\tilde {M}_{i}:=\cM_{i}\cup_{U_i}\cM_{1}^{\mathrm{2nd}}
\]
identifying inside $U_i :=\cM_i\setminus\cK_i$, which by hypothesis is identified with $\cM_1\setminus\cK_1$ via a diffeomorphism.

By hypothesis, the $\Dir_{i}$ agree on $U_{i}$ open and containing $\partial\cM_{i}$.
In $U_{i}$, $\Dir_{1}$ and $\Dir_{2}$ agree through identification by $( I_{\cE},I_{\cF},f) $.
So on doubling, we keep the smooth coefficients and obtain $\tilde{\Dir}_{i}$.

Similarly, $\mh^{\tilde{\cE}_{i}}$ have smooth coefficients also.
Therefore, the conclusions as stated follow.
\end{proof}

With this at our aid, we now prove the relative index theorem in the context of a general first-order elliptic differential operator, extending Theorem~1.21 in \cite{BB12} and Theorem~4.18 in \cite{GL}

\begin{proof}[Proof of Theorem~\ref{Thm:RelInd}] 
Since $\cK_{i}$ are compact, applying Corollary~\ref{Cor:CoerFred} to a manifold without boundary gives that $\Dir_{i}$ are Fredholm if and only if $\Dir_{i}$ are coercive at infinity, say with respect to\ a set $\tilde{\cK}_{i}$ (c.f Definition~\ref{Def:Coercive}).
But $\cK_{i}\cup\tilde{\cK}_{i}$ is still compact and $\Dir_{i}$ are coercive at infinity with respect to\ $\cK_{i}\cup\tilde{\cK}_{i}$ and hence it is easy to see that $\Dir_{1}$ is Fredholm  if and only if  $\Dir_{2}$ is Fredholm since $\Dir_{1},\Dir_{2}$ agree outside the compact set $\cK_{i}\cup\tilde{\cK}_{i}$.

Now from Theorem~\ref{Thm:PhiRelInd}, we obtain $\cN_2 = f(\cN_1)$ separates $\cM_2 =\cM_2'\union\cM_2''$ with $\cK_2\subset\interior\cM_2'$.
Let $\Dir_i'$ and $\Dir_i''$ be the induced operators on $\cM_i'$ and $\cM_i''$ respectively.
Choosing $B_{1}:=\chi^{-}(\Ad)\SobH{\frac{1}{2}}(\cN,\cE) $ for an invertible bisectorial adapted boundary operator $\Ad$ on $\cN_1$, and with $B_2$ pulled back to $\cN_2 = f(\cN_1)$ via $(I_\cE,I_\cF,f)$, we obtain from Theorem~\ref{Thm:PhiRelInd} that
\[
\ind(\Dir_{1}) -\ind(\Dir_{2}) =\ind(\Dir_{1,B_{1}}^{\prime}) -\ind(\Dir_{2,B_{2}}^{\prime}) .
\]

On application of Lemma~\ref{Lem:SmartDouble}, we obtain $\tilde{\cM}_{i}$ containing $\cM_i'$ and $\tilde{\Dir}_{i}$.
But $\tilde {M}_{i}$ is closed, $\tilde{\Dir}_{i}$ is elliptic on a closed manifold, so by Corollary~\ref{Cor:Decomp},
\[
\ind(\tilde{\Dir}_{i}) =\ind(\tilde{\Dir}_{i,B_{i}}^{\prime}) +\ind(\tilde {D}_{i,B_{i}^{\prime\prime}}^{\prime\prime}) ,
\]
where $B_{1}^{\prime\prime}:=\chi^{+}(\Ad)\SobH{\frac{1}{2}}(\cN,\cE) $ and $B_2''$ is $B_1''$ identified on $\tilde{\cM}_2''$ through $(\tilde{I}_E,\tilde{I}_F,\tilde{f})$ given by Lemma~\ref{Lem:SmartDouble}.

By construction, $\tilde{\Dir}_{1}$ and $\tilde{\Dir}_{2}$ agree outside of $\cK_{1},\cK_{2}$.
In particular, $\tilde{\Dir}_{1}$ and $\tilde{\Dir}_{2}$ agree on $\tilde{\cM}_{1}^{\prime\prime}$ and $\tilde{\cM}_{2}^{\prime\prime}$.
Therefore
\[
\ind(\tilde{\Dir}_{1,B_{1}^{\prime\prime}}^{\prime\prime }) =\ind(\tilde{\Dir}_{2,B_{2}^{\prime\prime}}^{\prime\prime}) ,
\]
and
\[
\ind(\Dir_{1,B_{1}}) -\ind(\Dir_{2,B_{2}^{\prime}}) =\ind(\tilde{\Dir}_{1}) -\ind(\tilde{\Dir}_{2}) .
\]
Since $\tilde{\cM}_{i}$ are closed and $\tilde{\Dir}_i$ are first-order elliptic, we can apply Atiyah-Singer index theorem to obtain
\begin{align*}
\ind(\tilde{\Dir}_{i}) & =\int_{\tilde{\cM}_{i}}\alpha_{0,\tilde{\Dir}_{i}}  
=
\int_{\cK_{i}}\alpha_{0,\tilde{\Dir}_{i}}\ d\mu_i  +\int_{\tilde{\cM}_{i}\setminus\cK_{i}}\alpha_{0,\tilde{\Dir}_{i}}\ d\mu_i,
\end{align*}
where $\alpha_{0,\tilde{\Dir}_i}$ is the constant term in asymptotic expansion of 
$$\tr \cbrac{\e^{-\tau\tilde{\Dir}_i^\ast\tilde{\Dir}_i} -\e^{-\tau\tilde{\Dir}_i\tilde{\Dir}_i^\ast}}(x) \sim \sum_{k \geq -n} t^{\frac{k}{2}} \alpha_{k, \tilde{\Dir_i}}(x)$$
as $t \to 0$.
See Theorem~(EIII) and the accompanying Remark 2) in \cite{ABP}.

The operators $\Dir_{1},\Dir_{2}$ agree outside $\cK_{1},\cK_{2}$ and so certainly $\tilde{\Dir}_{1},\tilde{\Dir}_{2}$ agree outside $\cK_{1},\cK_{2}$.
Moreover, since we assume $\mu_2 = f^\ast\mu_1$,
\[
\int_{\tilde{\cM}_{1}\setminus\cK_{1}}\alpha_{0,\tilde{\Dir}_{1}}\ d\mu_1 =\int_{\tilde{\cM}_{2}\setminus\cK_{2}}\alpha_{0,\tilde{\Dir}_{2}}\ d\mu_2.
\]
Also, $\tilde{\Dir}_{i}=\Dir_{i}$ on $\cK_{i}$, so
\[\alpha_{0,\tilde{\Dir}_{i}}( x) =\alpha_{0,\Dir_{i}}( x)\]
for $x\in\cK_{i}$.
Hence, we conclude
\[
\ind(\tilde{\Dir}_{1}) -\ind(\tilde{\Dir}_{2}) =\int_{\cK_{1}}\alpha_{0,\Dir_{1}}\ d\mu_1 -\int_{\cK_{2}}\alpha_{0,\Dir_{2}}\ d\mu_2.
\qedhere
\]
\end{proof}

\begin{remark}
If we were to dispense with using Lemma~\ref{Lem:SmartDouble}, we would need to directly compute $\ind(\Dir_{i,B_{i}}^{\prime})$.
In the situation where $\Ad_1 :=\Ad$ is self-adjoint, we can accomplish this using the Atiyah-Patodi-Singer index theorem.
By pulling across $\Ad$ to $\cM_2'$, to obtain $\Ad_2$ we have that $\dim\ker\Ad_2 =\dim\ker\Ad_1$ and $\eta_{\Ad_1}(s) =\eta_{\Ad_2}(s)$.
Therefore, these terms cancel, and we obtained the stated relative index formula.
The advantage of using the Atiyah-Singer index theorem  as opposed to  the Atiyah-Patodi-Singer index theorem is that we do not require additional assumptions on the $\Dir_i$.
\end{remark}

\begin{remark}
As noted in \cite{BB12}, when the operators $\Dir_i$ are Dirac-type on Riemannian manifold $(\cM_i, \mg_i)$, the quantities $\alpha_{0,\Dir_i}$ can be computed pointwise and explicitly in local coordinates from the coefficients of $\Dir_i$ and their derivatives (c.f. Chapter 4 in \cite{BGV}).
However, as we have aforementioned, the Rarita-Schwinger operator is a naturally determined operator from physics and geometry which falls outside of the Dirac-type regime.  
For this and other geometric operators, understanding these quantities geometrically as for the Dirac-type case, beyond the abstract description arising from the asymptotic expansion, is likely to be interesting and insightful.
These are open questions in the field which are current and actively pursued. 
\end{remark}


\bibliographystyle{alpha}
\begin{bibdiv}
\begin{biblist}

\bib{ABP}{article}{
      author={Atiyah, M.},
      author={Bott, R.},
      author={Patodi, V.~K.},
       title={On the heat equation and the index theorem},
        date={1973},
        ISSN={0020-9910},
     journal={Invent. Math.},
      volume={19},
       pages={279\ndash 330},
         url={https://doi.org/10.1007/BF01425417},
      review={\MR{650828}},
}

\bib{APS0}{article}{
      author={Atiyah, Michael~F.},
      author={Patodi, Vijay~K.},
      author={Singer, Isadore~M.},
       title={Spectral asymmetry and {R}iemannian geometry},
        date={1973},
        ISSN={0024-6093},
     journal={Bull. London Math. Soc.},
      volume={5},
       pages={229\ndash 234},
}

\bib{APS1}{article}{
      author={Atiyah, Michael~F.},
      author={Patodi, Vijay~K.},
      author={Singer, Isadore~M.},
       title={Spectral asymmetry and {R}iemannian geometry. {I}},
        date={1975},
        ISSN={0305-0041},
     journal={Math. Proc. Cambridge Philos. Soc.},
      volume={77},
       pages={43\ndash 69},
}

\bib{APS2}{article}{
      author={Atiyah, Michael~F.},
      author={Patodi, Vijay~K.},
      author={Singer, Isadore~M.},
       title={Spectral asymmetry and {R}iemannian geometry. {II}},
        date={1975},
        ISSN={0305-0041},
     journal={Math. Proc. Cambridge Philos. Soc.},
      volume={78},
      number={3},
       pages={405\ndash 432},
}

\bib{APS3}{article}{
      author={Atiyah, Michael~F.},
      author={Patodi, Vijay~K.},
      author={Singer, Isadore~M.},
       title={Spectral asymmetry and {R}iemannian geometry. {III}},
        date={1976},
        ISSN={0305-0041},
     journal={Math. Proc. Cambridge Philos. Soc.},
      volume={79},
      number={1},
       pages={71\ndash 99},
}

\bib{BB12}{incollection}{
      author={B{\"a}r, Christian},
      author={Ballmann, Werner},
       title={Boundary value problems for elliptic differential operators of
  first order},
        date={2012},
   booktitle={Surveys in differential geometry. {V}ol. {XVII}},
      series={Surv. Differ. Geom.},
      volume={17},
   publisher={Int. Press, Boston, MA},
       pages={1\ndash 78},
         url={http://dx.doi.org/10.4310/SDG.2012.v17.n1.a1},
      review={\MR{3076058}},
}

\bib{BBan}{article}{
      author={B\"{a}r, Christian},
      author={Bandara, Lashi},
       title={Boundary value problems for general first-order elliptic
  differential operators},
        date={2019},
     journal={arXiv:1906.08581},
      eprint={1906.08581},
}

\bib{BGV}{book}{
      author={Berline, Nicole},
      author={Getzler, Ezra},
      author={Vergne, Mich\`ele},
       title={Heat kernels and {D}irac operators},
      series={Grundlehren Text Editions},
   publisher={Springer-Verlag, Berlin},
        date={2004},
        ISBN={3-540-20062-2},
        note={Corrected reprint of the 1992 original},
      review={\MR{2273508}},
}

\bib{Bunke}{article}{
      author={Bunke, Ulrich},
       title={A {$K$}-theoretic relative index theorem and {C}allias-type
  {D}irac operators},
        date={1995},
        ISSN={0025-5831},
     journal={Math. Ann.},
      volume={303},
      number={2},
       pages={241\ndash 279},
         url={https://doi.org/10.1007/BF01460989},
      review={\MR{1348799}},
}

\bib{GL}{article}{
      author={Gromov, Mikhael},
      author={Lawson, H.~Blaine, Jr.},
       title={Positive scalar curvature and the {D}irac operator on complete
  {R}iemannian manifolds},
        date={1983},
        ISSN={0073-8301},
     journal={Inst. Hautes \'{E}tudes Sci. Publ. Math.},
      number={58},
       pages={83\ndash 196 (1984)},
         url={http://www.numdam.org/item?id=PMIHES_1983__58__83_0},
      review={\MR{720933}},
}

\end{biblist}
\end{bibdiv}

\setlength{\parskip}{0pt}
\end{document}